\documentclass[pdflatex,sn-mathphys-num]{sn-jnl}
\usepackage{graphicx}

\usepackage[T1]{fontenc}
\usepackage[utf8]{inputenc}
\usepackage{datetime2}

\usepackage{amssymb,amsthm,amsmath}
\usepackage{xcolor,paralist,hyperref,fancyhdr,etoolbox}
\usepackage{comment}

\newtheorem{theorem}{Theorem}[section]
\newtheorem{lemma}[theorem]{Lemma}
\newtheorem{proposition}[theorem]{Proposition}
\newtheorem{corollary}[theorem]{Corollary}
\newtheorem{conjecture}[theorem]{Conjecture}
\theoremstyle{definition}
\newtheorem{definition}[theorem]{Definition}
\theoremstyle{remark}

\newtheorem{remark}[theorem]{Remark}

\newcommand{\cA}{{\mathcal A}}
\newcommand{\cB}{{\mathcal B}}

\newcommand{\cF}{{\mathcal F}}
\newcommand{\cH}{{\mathcal H}}

\newcommand{\cJ}{{\mathcal J}}
\newcommand{\cL}{{\mathcal L}}
\newcommand{\cN}{{\mathcal N}}
\newcommand{\cM}{{\mathcal M}}
\newcommand{\cP}{{\mathcal P}}

\newcommand{\cV}{{\mathcal V}}
\newcommand{\cW}{{\mathcal W}}

\newcommand{\bC}{{\mathbb{C}}}

\newcommand{\bN}{{\mathbb{N}}}
\newcommand{\bM}{{\mathbb{M}}}

\newcommand{\Tr}{\mathrm{Tr}}
\newcommand{\id}{\mathrm{id}}

\newcommand{\jed}{{\mathbb{I}}}

\newcommand{\la}{\langle}
\newcommand{\ra}{\rangle}
\newcommand{\ket}[1]{|#1\rangle}

\newcommand{\ketbra}[2]{|#1\rangle\langle #2|}

\newcommand{\vNtens}{\mathbin{\overline{\otimes}}}
\newcommand{\algtens}{\mathbin{\odot}}
\newcommand{\CB}{\mathrm{CB}}
\newcommand{\CBs}{\mathrm{CB}^{\sigma}}
\newcommand{\Nor}{\mathcal{N}\mathrm{or}}
\begin{document}
\title[Anti-isomorphisms and channel--state duality]{Anti-isomorphisms and the naturality of channel--state duality}


\author*[1]{\fnm{Marcin} \sur{Marciniak}}\email{marcin.marciniak@ug.edu.pl}
\equalcont{These authors contributed equally to this work.}

\author[1]{\fnm{Michał} \sur{Cholewiak}}\email{michalch42@gmail.com}
\equalcont{These authors contributed equally to this work.}

\affil*[1]{\orgdiv{Faculty of Mathematics, Physics and Informatics}, \orgname{University of Gda{\'n}sk}, \orgaddress{\street{Wita Stwosza 57}, \city{Gdańsk}, \postcode{80-309}, 
                    \country{Poland}}}


\abstract{Channel--state duality identifies completely positive maps with bipartite states. Its generalisation to von Neumann algebras is known to produce states not on $\mathcal{M}\bar{\otimes}\mathcal{N}$ but on the tensor product of $\mathcal{M}$ with the \emph{opposite} algebra of $\mathcal{N}$, or equivalently with a commutant; returning to $\mathcal{N}$ itself requires a transposition, and it has been observed that this step typically fails. We determine exactly when it does not fail. Our main result is that a Choi--Jamio{\l}kowski type isomorphism which is natural in the first algebra exists if and only if the second algebra is $*$-anti-isomorphic to itself. Naturality cannot be dropped: whenever $\mathcal{M}$ is anti-isomorphic to itself such an isomorphism exists for trivial reasons, since the order structure of a predual is a Jordan invariant and cannot distinguish an algebra from its opposite. Consequently no such correspondence exists for the type III factors constructed by Connes. We show in addition that the domain admits no description in terms of properties of individual maps: the space of all normal maps with the operator norm is too large, and neither complete boundedness nor complete positivity cuts it down to the right size.}
\keywords{Channel--state duality, Choi--Jamio{\l}kowski isomorphism, anti-isomorphism, opposite algebra, von Neumann algebras, natural transformation}


\pacs[MSC Classification]{46L10, 46L07, 46L06, 18A23, 81P16}

\maketitle

\section{Introduction}

The Choi--Jamio{\l}kowski isomorphism establishes a duality between completely
positive maps and bipartite states \cite{Choi,Jamiolkowski}. For a linear map
$\Phi$ between algebras of operators on finite-dimensional Hilbert spaces one
sets $J(\Phi)=(\jed\otimes\Phi)(\ketbra{\Omega}{\Omega})$, where $\ket{\Omega}$
is an unnormalised maximally entangled vector, and $\Phi$ is completely
positive precisely when $J(\Phi)$ is a positive operator. The correspondence is
used so routinely that it is easy to overlook how much of it depends on the
finite-dimensional setting: it presupposes a trace, a distinguished maximally
entangled vector, and an identification of operators with density matrices.

None of these survives the passage to systems with infinitely many degrees of
freedom. Local algebras of observables in algebraic quantum field theory, and
many algebras occurring in quantum statistical mechanics, are factors of type
III; they carry no trace, no minimal projections, and no notion of a density
matrix. If the duality between channels and bipartite states is to have any
meaning there, it must be reconstructed from properties that do not refer to
these objects.

This has been done. Holevo \cite{Holevo11} treated the type~I case, and
Haapasalo \cite{Haapasalo} constructed, for an injective $\sigma$-finite $\cM$
equipped with a faithful normal state and a cyclic separating GNS vector, an
affine bijection between normal channels and binormal states on
$\cM'\otimes_{\min}\cN$. The present paper does not compete with that
construction; it starts from the observation, made explicitly in
\cite[Rem.~1]{Haapasalo}, that such a correspondence naturally produces states
on the \emph{commutant} of $\cM$, and that removing the commutant requires a
transposition which is not completely positive, so that the resulting
functional typically fails to extend to a state. Our question is the one left
open there: \emph{when} does it fail, and when does it not. The answer, it
turns out, is a clean dichotomy governed by a single algebraic invariant.
Section~\ref{subsec:related} discusses the relation to \cite{Haapasalo} in
detail.

We now describe our results. The natural replacements are not hard to guess. A channel becomes a normal
completely positive map from an algebra $\cM$ into the predual $\cN_*$ of a
second algebra, and a bipartite state becomes a normal positive functional on
the spatial tensor product $\cM\vNtens\cN$. The question is whether the two
sides are the same thing. Our first observation is that they are, but not with
$\cN$ on the right-hand side: what one obtains canonically, without any
hypothesis whatsoever, is an order isomorphism
$$\Nor(\cM,\cN_*)\ \cong\ (\cM\vNtens\cN^\circ)_*$$
onto the predual of the tensor product with the \emph{opposite} algebra
$\cN^\circ$. This is Proposition~\ref{prop:canonical}. In the
finite-dimensional case the discrepancy is invisible because it is absorbed by
a transposition, the one hidden in the formula for $J(\Phi)$; in general it is
not, and the whole content of the theory lies in whether $\cN^\circ$ can be
traded for $\cN$.

The passage is possible whenever $\cN$ admits a $*$-anti-automorphism, and then
one recovers a Choi--Jamio{\l}kowski type isomorphism in the expected sense
(Theorem~\ref{CJ-type_isom_thm}). Such an anti-automorphism has a natural
physical reading as a symmetry of the charge conjugation or time reversal type.
Our main result is the converse: if the correspondence is required to be
natural in $\cM$, then its existence forces $\cN$ to be $*$-anti-isomorphic to
itself (Theorem~\ref{thm:converse}). Naturality cannot be dispensed with. As
Remark~\ref{rem:no-naive-converse} shows, for every $\cM$ which is
anti-isomorphic to itself, and in particular for $\cM=B(\cH)$, for abelian
$\cM$ and for hyperfinite $\cM$, an isomorphism exists for trivial reasons: the
order structure of a predual is a Jordan invariant and cannot distinguish an
algebra from its opposite. What we require is in fact only naturality with
respect to the embeddings of $\bC$ into finite matrix algebras as corners,
which is the categorical shadow of the elementary fact that complete positivity
is detected by matrix amplifications. Since Connes exhibited factors of type
III admitting no anti-isomorphism at all \cite{Connes}, the correspondence
genuinely fails for some systems of physical interest.

A further point emerged in the course of the analysis and seems worth stating
separately, since it is invisible in finite dimensions: the domain of the
correspondence resists any intrinsic description in terms of properties of
individual maps. The obvious candidate, the space of all normal maps
$\cM\to\cN_*$ with the operator norm, is a perfectly good Banach space, but it
corresponds to separately normal bilinear forms, a class strictly larger than
$(\cM\vNtens\cN)_*$ for all but the most special $\cM$; this is the binormal
tensor product of Effros and Lance \cite{EffrosLance}, and the two classes
agree for every $\cN$ only when $\cM$ is a product of type I factors
\cite{Wiersma}. Neither of the
two natural strengthenings repairs this. Passing to completely bounded maps
fixes the norm but not the size of the space, and restricting to completely
positive maps does not help either, since a completely positive map into a
predual need not even be completely bounded. We therefore take the domain to be
the operator space projective tensor product $\cM_*\widehat\otimes\cN_*$ itself,
realised as a space of maps. These matters are discussed in
Remarks~\ref{rem:L-instead-of-CB} and~\ref{rem:cp-not-cb}.

The results may be read as a statement about correlations. Barnum et al.\
\cite{Barnum} showed, using the finite-dimensional Choi--Jamio{\l}kowski
isomorphism together with Gleason's theorem, that two locally quantum systems
obeying the no-signalling principle must exhibit quantum correlations. The
present work identifies the algebraic input that argument silently uses, namely
the availability of an anti-isomorphism, and shows that for Connes' factors it
is unavailable. Whether this obstruction is already visible at the level of
individual preparations, rather than of the whole correspondence, is formulated
as Conjecture~\ref{non_anti_isom}; we indicate there that by the generalized
Gleason theorem \cite{Hamhalter,BunceWright} the question reduces to the
binormal-versus-normal problem mentioned above.

The paper is organized as follows. Section~\ref{sec:cj} fixes the matrix-order
conventions on the predual, which is where the transposition alluded to above
has to be pinned down, states the definition of a Choi--Jamio{\l}kowski type
isomorphism and establishes its existence under the assumption that $\cN$ is
anti-isomorphic to itself. Section~\ref{sec:connes} proves the converse under
naturality and draws the consequences for Connes' factors.
Section~\ref{sec:concl} contains conclusions and directions for further
analysis.


\section{Generalized Choi--Jamio{\l}kowski isomorphisms}
\label{sec:cj}

Before introducing the generalized framework, it is instructive to recall the classical formulation of the Choi--Jamio{\l}kowski isomorphism for finite-dimensional Hilbert spaces \cite{Choi,Jamiolkowski}. Let $\mathcal{H}_A$ and $\mathcal{H}_B$ be finite-dimensional Hilbert spaces, and let $B(\mathcal{H}_A)$ and $B(\mathcal{H}_B)$ denote the corresponding algebras of linear operators.
Let $\Phi: B(\mathcal{H}_A) \to B(\mathcal{H}_B)$ be a linear map. By $\ket{\Omega}$ we denote the multiple of the maximally entangled state, i.e. $\ket{\Omega}=\sum_{i=1}^{d_A}\ket{i}\otimes\ket{i}=\sqrt{d_A}\ket{\psi^+}$, where $\ket{1},\ldots,\ket{d_A}$ is any orthonormal basis in $\mathcal{H}_A$. The standard isomorphism associates $\Phi$ with a bipartite operator $J(\Phi) \in B(\mathcal{H}_A \otimes \mathcal{H}_B)$, defined via the action of the extended map on a maximally entangled state.

\begin{equation*}
    J(\Phi) = (\jed_A \otimes \Phi)(|\Omega\rangle\langle\Omega|).
\end{equation*}

The mathematical power of this theorem lies in the fact that $\Phi$ is completely positive (CP) if and only if its associated Choi matrix $J(\Phi)$ is a positive operator \cite{Choi}. When $\Phi$ is a trace-preserving quantum channel, the normalized operator $\frac{1}{d_A} J(\Phi)$ constitutes a valid quantum density matrix, commonly referred to as the Choi state.

To generalize the Choi--Jamio{\l}kowski isomorphism, we have to identify an
infinite-dimensional analogue of the spaces of maps and of bipartite states.
The generalization of the space of quantum channels is a space of normal,
linear and completely positive maps acting from an algebra $\cM$ into the
predual $\cN_*$, while bipartite states are represented by normal linear
functionals on $\cM\vNtens\cN$. Accordingly, the whole section is devoted to a
correspondence between maps $\cM\to\cN_*$ and elements of
$(\cM\vNtens\cN)_*$.

Two points require care before such a correspondence can even be stated, and
both are settled in Subsection~\ref{subsec:conventions}. The first is that
complete positivity of a map into $\cN_*$ refers to a matrix ordering on
$\cN_*$, and the two conventions available in the literature differ by a
transposition; the wrong choice reverses the statement of
Theorem~\ref{CJ-type_isom_thm}. The second is that the class of maps
$\cM\to\cN_*$ occurring on the left-hand side must be chosen with some
precision: the naive choice --- all normal maps with the operator norm --- is
strictly too large, for reasons going back to Effros and Lance
\cite{EffrosLance}. This is discussed in Remark~\ref{rem:L-instead-of-CB}.

\subsection{Conventions}
\label{subsec:conventions}

Throughout, $\cM$ and $\cN$ are von Neumann algebras, $\cM\algtens\cN$ denotes
their algebraic tensor product and $\cM\vNtens\cN$ the spatial (normal) tensor
product. We write $\cM_*$ for the predual of $\cM$, i.e.\ the norm-closed
subspace of $\cM^*$ consisting of $\sigma$-weakly continuous functionals.

\begin{definition}
\label{def:tau}
A \emph{$*$-anti-automorphism} of $\cN$ is a linear bijection
$\tau:\cN\to\cN$ such that
$$\tau(y^*)=\tau(y)^*,\qquad \tau(y_1y_2)=\tau(y_2)\,\tau(y_1)
\qquad(y,y_1,y_2\in\cN).$$
\end{definition}

Equivalently, $\tau$ is a $*$-isomorphism of $\cN$ onto the opposite algebra
$\cN^\circ$. Two consequences will be used repeatedly; they are \emph{not}
additional hypotheses.

\begin{lemma}
\label{lem:tau-auto}
Every $*$-anti-automorphism $\tau$ of $\cN$ is isometric and normal, and so is
$\tau^{-1}$.
\end{lemma}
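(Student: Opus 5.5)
The plan is to deduce both properties from purely order-theoretic facts about $\cN$, so that nothing beyond Definition~\ref{def:tau} is needed. First, $\tau$ is positive and unital. For positivity, every positive $y\in\cN$ can be written $y=x^*x$, and then $\tau(y)=\tau(x)\tau(x^*)=\tau(x)\tau(x)^*\ge 0$. For unitality, $\tau(\jed)$ is a two-sided identity for $\tau(\cN)=\cN$, since $\tau(\jed)\tau(y)=\tau(y\jed)=\tau(y)$ and likewise on the other side; hence $\tau(\jed)=\jed$. The inverse $\tau^{-1}$ is again a $*$-anti-automorphism, because
\[
\tau^{-1}(ab)=\tau^{-1}\bigl(\tau(\tau^{-1}b)\,\tau(\tau^{-1}a)\bigr)=\tau^{-1}(b)\,\tau^{-1}(a),
\]
and it preserves adjoints for the same reason. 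So every claim proved for $\tau$ applies to $\tau^{-1}$ as well.

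For isometry, I would use the spectrum. Because $\tau$ is a unital bijective anti-homomorphism, $y-\lambda\jed$ is invertible exactly when $\tau(y)-\lambda\jed$ is invertible; inverses are carried to inverses since the order reversal does not affect a two-sided inverse. Hence $\sigma(\tau(y))=\sigma(y)$. For self-adjoint $y$ this gives $\|\tau(y)\|=\|y\|$, since the norm equals the spectral radius. For general $y$, the C$^*$-identity gives
\[
\|\tau(y)\|^2=\|\tau(y)^*\tau(y)\|=\|\tau(y^*)\tau(y)\|=\|\tau(yy^*)\|=\|yy^*\|=\|y\|^2 .
\]
Here $\tau(y^*)\tau(y)=\tau(yy^*)$ by the anti-multiplicativity of $\tau$.

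The step I expect to be the main obstacle is normality, because $\sigma$-weak continuity is not automatic for a general linear bijection. The plan is to use that $\tau$ and $\tau^{-1}$ are both positive. Hence $\tau$ is an order isomorphism of $\cN_+$, and so it preserves suprema of bounded increasing nets. Concretely, let $y_\alpha\nearrow y$ and set $z=\sup_\alpha\tau(y_\alpha)$, which exists and satisfies $z\le\tau(y)$. Applying the positive map $\tau^{-1}$ gives $y_\alpha\le\tau^{-1}(z)$ for every $\alpha$, so $y\le\tau^{-1}(z)$. Applying $\tau$ again gives $\tau(y)\le z$, so $\tau(y)=z$. A positive map that preserves suprema of bounded increasing nets is normal. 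By the standard characterisation (e.g.\ in Takesaki or Kadison--Ringrose), it is then $\sigma$-weakly continuous, since $\omega\circ\tau$ is a normal positive functional for each $\omega\in\cN_*^+$ and $\cN_*$ is spanned by its positive part. The same argument applied to $\tau^{-1}$ completes the proof.

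As an alternative for this last step, one could note that $\tau$ is a Jordan $*$-isomorphism and invoke the known normality of Jordan isomorphisms between von Neumann algebras. I prefer the direct order argument above because it is self-contained.
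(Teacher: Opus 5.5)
Your proof is correct, but it takes a different route from the paper's. The paper argues in one line: $\tau$ is a $*$-isomorphism of $\cN$ onto the opposite algebra $\cN^\circ$. It then applies the standard facts that $*$-isomorphisms of $C^*$-algebras are isometric and $*$-isomorphisms of von Neumann algebras are normal, using that $\cN^\circ$ is a von Neumann algebra with the same $\sigma$-weak topology as $\cN$.

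You work entirely inside $\cN$ and in effect reprove those standard facts in the anti-multiplicative setting. For isometry you use preservation of spectra plus the $C^*$-identity. For normality you show that an order isomorphism preserves suprema of bounded increasing nets, so $\omega\circ\tau$ is normal for every $\omega\in\cN_*^+$.

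The paper's version is shorter, but it leans on the observation that $\cN^\circ$ is a von Neumann algebra with predual $\cN_*$. This is true, for instance by realising $\cN^\circ$ through transposes on the conjugate Hilbert space, but the paper states it without argument. Your version avoids the opposite algebra altogether. It also shows slightly more: normality needs only that $\tau$ and $\tau^{-1}$ are positive. This fits the paper's later remark that the order structure of a predual is a Jordan invariant.

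One small slip: your display for $\tau^{-1}$ is garbled, because its middle term equals $\tau^{-1}(ba)$. It should read
\[
\tau^{-1}(ab)=\tau^{-1}\bigl(\tau(\tau^{-1}a)\,\tau(\tau^{-1}b)\bigr)=\tau^{-1}\bigl(\tau(\tau^{-1}(b)\,\tau^{-1}(a))\bigr)=\tau^{-1}(b)\,\tau^{-1}(a).
\]
This does not affect the rest of the argument.
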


\begin{proof}
A $*$-isomorphism of $C^*$-algebras is isometric and a $*$-isomorphism of von
Neumann algebras is normal; apply this to $\tau:\cN\to\cN^\circ$, observing
that $\cN^\circ$ is a von Neumann algebra carrying the same $\sigma$-weak
topology as $\cN$.
\end{proof}

\begin{remark}
\label{rem:involutive-not-needed}
We do \emph{not} assume $\tau^2=\id$. Involutivity is nowhere used below; all
that is needed is that $\cN$ be anti-isomorphic to itself, i.e.\
$\cN\cong\cN^\circ$. This matters for Section~\ref{sec:connes}: Connes'
factors admit no anti-isomorphism whatsoever, so stating the sufficient
condition in the weaker form removes an artificial gap between the positive
result and the counterexamples. The physical interpretation of $\tau$ as a
symmetry such as charge conjugation or time reversal is of course unaffected.
\end{remark}

\subsubsection*{The matrix order on $\cN_*$}

We identify $M_n(\cN_*)$ with $\bigl(M_n(\cN)\bigr)_*$ through the
\emph{trace pairing}
\begin{equation}
\label{eq:trace-pairing}
\bigl\la [\psi_{ij}],\,[b_{ij}]\bigr\ra:=\sum_{i,j=1}^n\psi_{ij}(b_{ji}),
\qquad [\psi_{ij}]\in M_n(\cN_*),\ [b_{ij}]\in M_n(\cN),
\end{equation}
and define $M_n(\cN_*)^+$ to be the preimage of the cone of positive normal
functionals on $M_n(\cN)$. A linear map $\phi:\cM\to\cN_*$ is
\emph{completely positive} (CP) if $\phi^{(n)}\bigl(M_n(\cM)^+\bigr)\subseteq
M_n(\cN_*)^+$ for every $n\in\bN$.

\begin{remark}
\label{rem:why-trace-pairing}
The choice \eqref{eq:trace-pairing} is the one under which the
finite-dimensional picture is the familiar one. For $\cN=\bM_d$ the map
$a\mapsto\Tr(a\,\cdot\,)$ is a complete order isomorphism of $\bM_d$ onto
$(\bM_d)_*$ precisely with respect to \eqref{eq:trace-pairing}; the
``straight'' pairing $\sum_{i,j}\psi_{ij}(b_{ij})$ induces instead the
transposed matrix order, for which $\Tr(a\,\cdot\,)\mapsto a$ becomes a complete
order isomorphism onto $\bM_d^\circ$. Both conventions occur in the
literature, and only \eqref{eq:trace-pairing} makes the Choi matrix of a CP map
positive in the classical sense of \cite{Choi}.
\end{remark}

\begin{lemma}
\label{lem:matrix-order}
Let $\psi_{ij}\in\cN_*$ for $1\le i,j\le n$. The following are equivalent.
\begin{enumerate}
\item[\rm(i)] $[\psi_{ij}]\in M_n(\cN_*)^+$;
\item[\rm(ii)] $\displaystyle\sum_{i,j=1}^n\psi_{ij}\bigl(z_jz_i^*\bigr)\ge0$
for all $z_1,\dots,z_n\in\cN$.
\end{enumerate}
\end{lemma}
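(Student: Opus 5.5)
By the definition of $M_n(\cN_*)^+$, condition (i) says that the functional
$$\Psi([b_{ij}]):=\sum_{i,j=1}^n\psi_{ij}(b_{ji})$$
on $M_n(\cN)$ is positive. Both implications will come from testing $\Psi$ on suitable positive elements of $M_n(\cN)$. For (ii) the test elements are rank-one-type matrices built from a column $z_1,\dots,z_n$; for the converse we use the fact that such matrices generate the positive cone of $M_n(\cN)$.

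\emph{(i)$\Rightarrow$(ii).} Given $z_1,\dots,z_n\in\cN$, put $b_{ij}:=z_iz_j^*$. Then $[b_{ij}]=ZZ^*$, where $Z\in M_n(\cN)$ has first column $(z_1,\dots,z_n)^T$ and all other entries zero. Hence $[b_{ij}]\ge0$ and
$$0\le\Psi([b_{ij}])=\sum_{i,j}\psi_{ij}(b_{ji})=\sum_{i,j}\psi_{ij}(z_jz_i^*),$$
which is exactly (ii).

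\emph{(ii)$\Rightarrow$(i).} Every positive element of $M_n(\cN)$ has the form $X^*X=\sum_{k=1}^n X^*e_{kk}X$, with $X\in M_n(\cN)$ and $e_{kk}$ the matrix units. Write the $k$-th row of $X$ as $(x_{k1},\dots,x_{kn})$. Then
$$X^*e_{kk}X=[x_{ki}^*x_{kj}]_{i,j}=[w_iw_j^*]_{i,j},\qquad w_i:=x_{ki}^*.$$
Consequently $M_n(\cN)^+$ consists of the finite sums of matrices $[z_iz_j^*]$. By linearity, $\Psi$ is positive as soon as it is nonnegative on each such matrix, and the computation above shows that this is precisely (ii).

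The only step needing care is this decomposition of the cone, and it is purely algebraic. Normality of $\Psi$ is automatic, since each $\psi_{ij}$ is normal. Finally, we must keep track of the index reversal $b_{ji}$ coming from the trace pairing \eqref{eq:trace-pairing}. It is what turns $z_iz_j^*$ into $z_jz_i^*$ in (ii); under the straight pairing one would instead obtain $z_iz_j^*$.
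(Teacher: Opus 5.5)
Your proof is correct and is essentially the paper's argument: (i)$\Rightarrow$(ii) tests the trace pairing on the positive matrix $[z_iz_j^*]$ built from a column. For (ii)$\Rightarrow$(i), your decomposition $X^*X=\sum_k X^*e_{kk}X$ into matrices of the form $[w_iw_j^*]$ is the paper's factorisation $[b_{ij}]=DD^*$, $b_{ij}=\sum_k d_{ik}d_{jk}^*$, with $D=X^*$.
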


\begin{proof}
(i)$\Rightarrow$(ii). Let $V\in M_{n,1}(\cN)$ be the column with entries
$z_i$. Then $VV^*=[z_iz_j^*]\in M_n(\cN)^+$, so by \eqref{eq:trace-pairing}
$$0\le\bigl\la[\psi_{ij}],[z_iz_j^*]\bigr\ra=\sum_{i,j}\psi_{ij}(z_jz_i^*).$$

(ii)$\Rightarrow$(i). Let $[b_{ij}]\in M_n(\cN)^+$ and write $[b_{ij}]=DD^*$
with $D=[d_{ik}]\in M_n(\cN)$, so that $b_{ij}=\sum_k d_{ik}d_{jk}^*$. Then
$$\bigl\la[\psi_{ij}],[b_{ij}]\bigr\ra=\sum_{i,j}\psi_{ij}(b_{ji})
=\sum_k\ \sum_{i,j}\psi_{ij}\bigl(d_{jk}d_{ik}^*\bigr)\ge0,$$
each inner sum being non-negative by (ii) applied to $z_i:=d_{ik}$.
\end{proof}

\begin{corollary}
\label{cor:cp-criterion}
A linear map $\phi:\cM\to\cN_*$ is completely positive if and only if
\begin{equation}
\label{eq:cp-criterion}
\sum_{i,j=1}^n\phi\bigl(x_i^*x_j\bigr)\bigl(z_jz_i^*\bigr)\ge0
\end{equation}
for all $n\in\bN$, $x_1,\dots,x_n\in\cM$ and $z_1,\dots,z_n\in\cN$.
\end{corollary}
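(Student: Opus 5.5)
The plan is to apply Lemma~\ref{lem:matrix-order} to the matrix $\phi^{(n)}(X)$ for positive $X\in M_n(\cM)$, and to note that it suffices to test $X$ of the form $V^*V$ with $V$ a row in $M_{1,n}(\cM)$. By definition, $\phi$ is CP exactly when $\phi^{(n)}(X)=[\phi(x_{ij})]\in M_n(\cN_*)^+$ for every $X=[x_{ij}]\in M_n(\cM)^+$. Lemma~\ref{lem:matrix-order} converts this into the requirement
$$\sum_{i,j}\phi(x_{ij})(z_jz_i^*)\ge0\qquad\text{for all } z_1,\dots,z_n\in\cN .$$

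For necessity, take $x_1,\dots,x_n\in\cM$ and the row $V=(x_1,\dots,x_n)$. Then $V^*V=[x_i^*x_j]\in M_n(\cM)^+$. If $\phi$ is CP, then $[\phi(x_i^*x_j)]\in M_n(\cN_*)^+$, and the implication (i)$\Rightarrow$(ii) of Lemma~\ref{lem:matrix-order} gives \eqref{eq:cp-criterion} directly.

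For sufficiency, let $X\in M_n(\cM)^+$ and factor $X=D^*D$ with $D=[d_{kj}]\in M_n(\cM)$, for instance $D=X^{1/2}$. Then
$$x_{ij}=\sum_k d_{ki}^*d_{kj}.$$
Fix $z_1,\dots,z_n\in\cN$. By linearity of $\phi$,
$$\sum_{i,j}\phi(x_{ij})(z_jz_i^*)=\sum_k\ \sum_{i,j}\phi\bigl(d_{ki}^*d_{kj}\bigr)(z_jz_i^*).$$
Each inner sum is non-negative by \eqref{eq:cp-criterion} applied with $x_i:=d_{ki}$. The implication (ii)$\Rightarrow$(i) of Lemma~\ref{lem:matrix-order} then gives $\phi^{(n)}(X)\in M_n(\cN_*)^+$.

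There is no real obstacle here; the argument mirrors the proof of Lemma~\ref{lem:matrix-order}. The only point needing care is index bookkeeping. The factorisation must be $X=D^*D$, not $DD^*$, so that the entries take the form $x_i^*x_j$ with the adjoint on the left. Only then does the order of $z_jz_i^*$ match what Lemma~\ref{lem:matrix-order} produces from the trace pairing \eqref{eq:trace-pairing}.
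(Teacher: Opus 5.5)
Your proposal is correct and follows the paper's proof: both write an arbitrary $X\in M_n(\cM)^+$ as $D^*D$, so that its entries are $\sum_k d_{ki}^*d_{kj}$, use linearity to reduce to matrices of the form $[x_i^*x_j]$, and then invoke Lemma~\ref{lem:matrix-order}. You apply criterion (ii) of that lemma summand by summand, whereas the paper uses that the cone $M_n(\cN_*)^+$ is closed under sums; this is only a cosmetic difference.
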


\begin{proof}
Every element of $M_n(\cM)^+$ is of the form
$\bigl[\sum_k x_{ki}^*x_{kj}\bigr]_{i,j}$, so by linearity it suffices to test
$\phi^{(n)}$ on matrices $[x_i^*x_j]$. Now apply
Lemma~\ref{lem:matrix-order}.
\end{proof}

\subsection{Choi--Jamio{\l}kowski type isomorphisms}

We can now state the definition around which the paper is organized. Its
domain requires a word of care. Recall that $(\cM\vNtens\cN)_*$ is completely
isometrically isomorphic to the operator space projective tensor product
$\cM_*\widehat\otimes\cN_*$ \cite[Cor.~3.6]{Ruan92} (see also
\cite[Thm.~3.2]{EffrosRuan90}); the isomorphism is the map $\theta$ determined
on elementary tensors by
$\langle\theta(\rho\otimes\nu),x\otimes y\rangle=\langle\rho,x\rangle\langle\nu,y\rangle$.
The elementary
tensor $\mu\otimes\nu$ acts on $\cM$ by $x\mapsto\mu(x)\nu$, and this extends
to a completely isometric embedding of $\cM_*\widehat\otimes\cN_*$ into
$\CB(\cM,\cN_*)$; we write
$$\Nor(\cM,\cN_*)\subseteq\CB(\cM,\cN_*)$$
for its image and equip it with the $cb$-norm. Every element of
$\Nor(\cM,\cN_*)$ is a normal completely bounded map, but the converse fails
(Remark~\ref{rem:L-instead-of-CB}(c)), so $\Nor(\cM,\cN_*)$ must be specified as
the image of $\cM_*\widehat\otimes\cN_*$ rather than by an intrinsic property
of maps.

\begin{definition}
\label{def:CJ}
Let $\cM,\cN$ be von Neumann algebras. By a \emph{Choi--Jamio{\l}kowski type
isomorphism} we mean a linear homeomorphism
$$\cJ:\Nor(\cM,\cN_*)\longrightarrow(\cM\vNtens\cN)_*$$
such that
$$\phi:\cM\to\cN_*\ \text{is completely positive}
\quad\Longleftrightarrow\quad\cJ(\phi)\in(\cM\vNtens\cN)_*^+.$$
\end{definition}

\begin{remark}[why not all normal maps]
\label{rem:L-instead-of-CB}
It is tempting to take for the domain the larger and more elementary space
$$\cL(\cM,\cN_*):=\{\phi:\cM\to\cN_*\ \text{continuous from }\sigma(\cM,\cM_*)
\ \text{to}\ \sigma(\cN_*,\cN)\}$$
of \emph{all} normal linear maps, equipped with the operator norm. This is a
legitimate Banach space --- boundedness of its elements is automatic by the
closed graph theorem applied to the adjoint $\cN\to\cM_*$, and it is complete
because $\cM_*$ is norm-closed in $\cM^*$ --- but it is not the right domain,
for two independent reasons. A third remark records that passing to the
$cb$-norm repairs the first of them but not the second.

\emph{(a) The norm.} As will follow from Proposition~\ref{prop:Lambda}(ii), the
operator norm of $\phi$ corresponds on the other side to
$\sup\{|\omega(x\otimes y)|:\|x\|\le1,\|y\|\le1\}$, i.e.\ to the norm dual to
the \emph{Banach} projective tensor norm on $\cM\algtens\cN$, whereas
$\|\omega\|$ is dual to the spatial norm. These are not equivalent. For
$\cM=\cN=\bM_2$ and $\omega=\Tr(F\,\cdot\,)$, $F$ the flip, one has
$\|\omega\|=\|F\|_1=4$ while $\Tr\bigl(F(x\otimes y)\bigr)=\Tr(xy)$ gives
$\sup_{\|x\|,\|y\|\le1}|\Tr(xy)|=2$. Thus no isometry is available on
$\cL(\cM,\cN_*)$, and it is exactly the passage to matrix levels --- that is,
to the $cb$-norm and the operator space projective tensor product --- that
restores it.

\emph{(b) The range.} Elements of $\cL(\cM,\cN_*)$ correspond bijectively to
bounded \emph{separately normal} (binormal) bilinear forms on $\cM\times\cN$,
and this class is strictly larger than $(\cM\vNtens\cN)_*$. The relevant
circle of ideas is that of Effros and Lance \cite{EffrosLance}, who introduced
the binormal $C^*$-norm $\|\cdot\|_{\mathrm{bin}}$ on $\cM\algtens\cN$ as the
supremum over binormal states \cite[\S2]{EffrosLance} and proved that it
coincides with $\|\cdot\|_{\min}$ for every $\cN$ if and only if $\cM$ is
semidiscrete \cite[Thm.~4.1]{EffrosLance}, equivalently injective
\cite[Cor.~5.10 and Note added in proof]{EffrosLance}.

It should be stressed that semidiscreteness is \emph{not} sufficient to close
the gap between binormal and normal forms, since $\min$-continuity is a weaker
requirement than normality on the spatial tensor product: a state on
$\cM\otimes_{\min}\cN$ need not extend to a normal state on $\cM\vNtens\cN$.
The condition under which the two classes do coincide for every $\cN$ is
considerably more restrictive, namely $\cM\cong\prod_i B(\cH_i)$; this is the
weak$^*$ tensor uniqueness property of \cite{Wiersma}. Thus even for
$\cM=L^\infty(\mathbb R)$, or for the hyperfinite factor of type
$\mathrm{II}_1$, both of which are semidiscrete, binormal forms strictly
exceed the normal ones. Consequently a map
$\cJ$ as in Definition~\ref{def:CJ} cannot be defined on all of
$\cL(\cM,\cN_*)$ except in degenerate situations, and the transposed
formulation --- constructing $\cJ$ by the formula \eqref{C-J_isom_formula} and
then verifying normality --- is not available. This is why in
Definition~\ref{def:Lambda} below we construct the correspondence in the
direction \emph{states $\to$ maps}, which is unconditionally well defined, and
recover $\cJ$ as its inverse.

\emph{(c) Complete boundedness does not repair (b).} It is natural to hope that
the space $\CBs(\cM,\cN_*)$ of normal \emph{completely bounded} maps, which
carries the correct norm by (a), also has the correct size. It does not, and
the reason is again Effros-Lance. Let $\cM$ be a non-injective factor of type
$\mathrm{II}_1$ in standard form on $\cH$, let $\cN=\cM'$, and for a unit
vector $\xi\in\cH$ put
$$\phi(x)(y)=\la xy\xi,\xi\ra\qquad(x\in\cM,\ y\in\cM').$$
This map is separately normal. It is also completely bounded: multiplication
$\cM\algtens\cM'\to B(\cH)$ is completely contractive for the Haagerup norm
\cite[Ch.~17]{P03}, and $\|\cdot\|_h\le\|\cdot\|_\wedge$ by maximality of the
operator space projective norm \cite[Ch.~7]{EffrosRuan}, so $\omega_\phi$ is
bounded on
$\cM\widehat\otimes\cM'$ and hence $\phi\in\CB(\cM,\cN_*)$. Yet $\omega_\phi$
is not normal on $\cM\vNtens\cM'$ for every $\xi$: if all the vector
functionals $\la m(\,\cdot\,)\xi,\eta\ra$ were $\min$-bounded, the uniform
boundedness principle would make the multiplication map $m$ itself
$\min$-bounded; by \cite[Prop.~4.5]{EffrosLance} this happens precisely when
$\cM$ is semidiscrete, hence, by \cite{ChoiEffros,ChoiEffrosGeneral} and
\cite[Note added in proof]{EffrosLance}, precisely when $\cM$ is injective. Hence
$$\Nor(\cM,\cN_*)\subsetneq\CBs(\cM,\cN_*)$$
in general, and $\Nor(\cM,\cN_*)$ cannot be described as the set of normal
completely bounded maps. Together with Remark~\ref{rem:cp-not-cb} this shows
that neither complete boundedness nor complete positivity closes the gap
between binormal and normal forms; by (b), nor does semidiscreteness of $\cM$.
This is why the domain in Definition~\ref{def:CJ} is specified extrinsically.

The same class arises independently in \cite[Thm.~1]{Haapasalo}, where the
Choi--Jamio{\l}kowski correspondence is shown to take normal channels onto
\emph{binormal} states; see Section~\ref{subsec:related}.

We do not know an intrinsic characterisation of $\Nor(\cM,\cN_*)$ among maps
$\cM\to\cN_*$, and we regard finding one as the natural question left open by
this remark. Let us indicate where we would look. On the dual side the
corresponding problem has a clean answer in terms of Tomiyama's slice maps
\cite{Tomiyama70}: writing $L_\nu$ and $R_\rho$ for the left and right slice
maps associated with $\nu\in\cN_*$ and $\rho\in\cM_*$, the Fubini product
$$\cF(\cM,\cN)=\{u:\ L_\nu(u)\in\cM,\ R_\rho(u)\in\cN\ \text{ for all }\rho,\nu\}$$
satisfies $\cF(\cM,\cN)\cong(\cM_*\widehat\otimes\cN_*)^*$ completely
isometrically \cite[Prop.~3.3]{Ruan92}, and for a pair of von Neumann algebras
one has $\cF(\cM,\cN)=\cM\vNtens\cN$ by Tomiyama's Fubini theorem
\cite{Tomiyama70}. This is a pointwise and checkable condition, and dualising
it is in our view the shortest route to a characterisation of the predual side.

We note also that the failure recorded above is of an approximation-theoretic
rather than a pointwise nature: $\Nor(\cM,\cN_*)$ is by construction the closure
of the normal finite-rank maps, and by \cite[Cor.~3.7]{Ruan92} the complete
isometry $\cV_*\widehat\otimes\cW_*\cong(\cV\vNtens\cW)_*$ holds for every dual
operator space $\cW$ precisely when $\cV$ has Kraus' property $S_\sigma$
\cite{Kraus83}, which is in turn equivalent to the weak$^*$ operator
approximation property \cite{Kraus91,EffrosKrausRuan}. The equality
$\cF(\cV,\cW)=\cV\vNtens\cW$ may fail once $\cW$ is allowed to be a merely
$\sigma$-weakly closed subspace rather than an algebra \cite{Kraus91}.
\end{remark}

\begin{remark}[complete positivity does not imply complete boundedness]
\label{rem:cp-not-cb}
The obstruction in Remark~\ref{rem:L-instead-of-CB}(b) is not removed by
restricting attention to positive maps, and this is worth emphasizing because
it is the point at which the present framework differs most sharply from the
finite-dimensional one. If $\phi:\cM\to\cN_*$ is normal and CP, then the
associated form $\omega_\phi(x\otimes y)=\phi(x)(\tau(y))$ is a positive
binormal functional on $\cM\algtens\cN$, but it need not extend to a normal
functional on $\cM\vNtens\cN$. Indeed, let $\cM\subset B(\cH)$ be a factor of
type $\mathrm{II}$ or $\mathrm{III}$, let $\cN=\cM'$, and consider the states
$\omega_\xi(x\otimes y)=\la xy\xi,\xi\ra$. Each of them is binormal, but the
multiplication map $\cM\algtens\cM'\to B(\cH)$ does not extend to a normal
$*$-homomorphism on $\cM\vNtens\cM'$ \cite{Wiersma}, so for some $\xi$ the
functional $\omega_\xi$ lies outside $(\cM\vNtens\cM')_*$.

The underlying reason is the failure of the usual bound for positive maps.
For CP $\phi$ one obtains only
$$\bigl\|\phi^{(n)}\bigr\|\le 2n\,\|\phi(\jed)\|,$$
since $\phi^{(n)}(\jed_n)=\mathrm{diag}(\phi(\jed),\dots,\phi(\jed))$ has norm
$n\|\phi(\jed)\|$ in $\bigl(M_n(\cN)\bigr)_*$; the growth in $n$ is genuine and
no uniform $cb$-bound follows. This is in sharp contrast with CP maps into a
$C^*$-algebra, for which $\|\phi\|_{cb}=\|\phi(\jed)\|$.
\end{remark}

\subsection{A density lemma}

\begin{lemma}
\label{lemma_dense}
Let
$$\cP_0=\Bigl\{\sum_{k=1}^m X_k^*X_k\ :\ X_k\in\cM\algtens\cN,\ m\in\bN\Bigr\}.$$
Then $\cP_0$ is $\sigma$-weakly dense in $(\cM\vNtens\cN)^+$.
\end{lemma}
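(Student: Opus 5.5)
The plan is to combine Kaplansky's density theorem with the continuous functional calculus for square roots. Set $\cA:=\cM\algtens\cN$. This is a unital $*$-subalgebra of $\cM\vNtens\cN$ whose $\sigma$-weak closure is $\cM\vNtens\cN$, since by definition the spatial tensor product is the von Neumann algebra generated by $\cA$. Its norm closure $\overline{\cA}$ is the $C^*$-algebra $\cM\otimes_{\min}\cN$. Kaplansky's density theorem, applied to $\overline{\cA}$ inside its strong closure $\cM\vNtens\cN$, gives the following. For every $u\in\cM\vNtens\cN$ with $\|u\|\le1$ there is a net $(u_\alpha)$ in the unit ball of $\overline{\cA}$ converging to $u$ strong$^*$. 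In particular $u_\alpha^*u_\alpha\to u^*u$ strongly, since multiplication is jointly strongly continuous on bounded sets and $u_\alpha^*\to u^*$ strongly as well.

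Now fix $T\in(\cM\vNtens\cN)^+$ and put $u:=T^{1/2}$. After scaling we may assume $\|u\|\le1$. Take a net $u_\alpha\in\overline{\cA}$ as above, so that $u_\alpha^*u_\alpha\to T$ strongly. On bounded sets the strong topology is finer than the $\sigma$-strong topology, and hence finer than the $\sigma$-weak topology. So $u_\alpha^*u_\alpha\to T$ $\sigma$-weakly. Each $u_\alpha$ is a norm limit of elements $X\in\cA$, and $X^*X\to u_\alpha^*u_\alpha$ in norm. This shows that $T$ lies in the $\sigma$-weak closure of the set $\{X^*X: X\in\cA\}\subseteq\cP_0$, and a single square already suffices. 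Conversely, $\cP_0\subseteq(\cM\vNtens\cN)^+$, and the positive cone is $\sigma$-weakly closed. Together these give density in the sense that the $\sigma$-weak closure of $\cP_0$ equals $(\cM\vNtens\cN)^+$.

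The main obstacle is to justify the strong$^*$ form of Kaplansky's theorem cleanly, that is, to approximate $u$ together with its adjoint so that $u_\alpha^*u_\alpha\to u^*u$. With the self-adjoint version alone one only gets $u_\alpha\to u$ strongly, and then $u_\alpha^*u_\alpha$ need not converge. I would avoid this issue by using the self-adjoint Kaplansky theorem directly. Since $u=T^{1/2}$ is self-adjoint, we can choose self-adjoint $u_\alpha$ in the unit ball of $\overline{\cA}$ with $u_\alpha\to u$ strongly. Then $u_\alpha^*=u_\alpha$ also converges strongly, and $u_\alpha^2\to u^2=T$ strongly on the bounded set. This settles the step.
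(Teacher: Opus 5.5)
Your proof is correct and follows the same route as the paper: you apply Kaplansky density to $T^{1/2}$, then use joint strong continuity of multiplication on bounded sets to get $u_\alpha^*u_\alpha\to T$ $\sigma$-weakly. The differences are cosmetic. Your detour through the norm closure $\cM\otimes_{\min}\cN$ is unnecessary, since Kaplansky applies to the $*$-algebra $\cM\algtens\cN$ directly. Your use of the self-adjoint form of Kaplansky, where the paper uses the $\sigma$-strong$^*$ form, suffices precisely because $T^{1/2}$ is self-adjoint.
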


\begin{proof}
Let $Z\in(\cM\vNtens\cN)^+$; we may assume $\|Z\|\le1$. Put $A=Z^{1/2}$, so
that $\|A\|\le1$ and $Z=A^*A$. The algebraic tensor product $\cM\algtens\cN$
is a unital $*$-subalgebra of $\cM\vNtens\cN$, $\sigma$-weakly dense by the
definition of the spatial tensor product. By the Kaplansky density theorem in
its $*$-form \cite[Thm.~5.3.5]{Kadison}, the unit ball of $\cM\algtens\cN$ is
dense in the unit ball of $\cM\vNtens\cN$ for the $\sigma$-strong$^*$
topology. Choose a net $(X_\alpha)\subset\cM\algtens\cN$ with
$\|X_\alpha\|\le1$ and $X_\alpha\to A$ $\sigma$-strongly$^*$; in particular
$X_\alpha\to A$ and $X_\alpha^*\to A^*$ $\sigma$-strongly.

Multiplication is jointly $\sigma$-strongly continuous on norm-bounded sets,
whence $X_\alpha^*X_\alpha\to A^*A=Z$ $\sigma$-strongly and therefore
$\sigma$-weakly. Since $X_\alpha^*X_\alpha\in\cP_0$, the claim follows.
\end{proof}

\begin{remark}
The $*$-form of Kaplansky's theorem is essential here: the involution is
\emph{not} $\sigma$-strongly continuous, so convergence $X_\alpha\to A$ in the
$\sigma$-strong topology alone would not yield $X_\alpha^*\to A^*$.
\end{remark}

\subsection{The correspondence and the main theorem}

\begin{definition}
\label{def:Lambda}
Let $\tau$ be a $*$-anti-automorphism of $\cN$. For
$\omega\in(\cM\vNtens\cN)_*$ define $\Lambda_\tau(\omega):\cM\to\cN_*$ by
\begin{equation}
\label{C-J_isom_formula}
\Lambda_\tau(\omega)(x)(y):=\omega\bigl(x\otimes\tau^{-1}(y)\bigr),
\qquad x\in\cM,\ y\in\cN.
\end{equation}
\end{definition}

Equivalently, $\Lambda_\tau(\omega)(x)=\omega(x\otimes\,\cdot\,)\circ\tau^{-1}$.
Written in the direction maps $\to$ states, \eqref{C-J_isom_formula} reads
$\cJ(\phi)(x\otimes y)=\phi(x)(\tau(y))$, which is the formula one expects from
the classical case; the point of stating it as above is that this direction is
well defined without any further hypothesis.

\begin{proposition}
\label{prop:Lambda}
\label{normal_functional_prop}
With $\cL(\cM,\cN_*)$ as in Remark~\ref{rem:L-instead-of-CB}:
\begin{enumerate}
\item[\rm(i)] $\Lambda_\tau(\omega)\in\cL(\cM,\cN_*)$ for every
$\omega\in(\cM\vNtens\cN)_*$;
\item[\rm(ii)] $\bigl\|\Lambda_\tau(\omega)\bigr\|
=\sup\{|\omega(x\otimes y)|:\|x\|\le1,\ \|y\|\le1\}\le\|\omega\|$;
\item[\rm(iii)] $\Lambda_\tau$ is injective;
\item[\rm(iv)] $\Lambda_\tau$ is a completely isometric linear isomorphism of
$(\cM\vNtens\cN)_*$ onto $\Nor(\cM,\cN_*)$.
\end{enumerate}
\end{proposition}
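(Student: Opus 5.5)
We prove the four items in order; (i)--(iii) use only Lemma~\ref{lem:tau-auto} and elementary facts about $\cM\vNtens\cN$, while (iv) is obtained by composing $\Lambda_\tau$ with the Effros--Ruan identification $\theta$ and the embedding defining $\Nor(\cM,\cN_*)$.

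\emph{(i).} Fix $\omega\in(\cM\vNtens\cN)_*$ and $x\in\cM$. The slice $y\mapsto\omega(x\otimes y)$ is normal on $\cN$, because $y\mapsto x\otimes y$ is $\sigma$-weakly continuous into $\cM\vNtens\cN$. Composing with the normal map $\tau^{-1}$ of Lemma~\ref{lem:tau-auto} shows that $\Lambda_\tau(\omega)(x)\in\cN_*$. Continuity of $x\mapsto\Lambda_\tau(\omega)(x)$ from $\sigma(\cM,\cM_*)$ to $\sigma(\cN_*,\cN)$ means that $x\mapsto\omega(x\otimes\tau^{-1}(y))$ is normal for each fixed $y$. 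This is the same slice argument in the first variable.

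\emph{(ii).} By definition,
$$\|\Lambda_\tau(\omega)\|=\sup_{\|x\|\le1}\sup_{\|y\|\le1}\bigl|\omega(x\otimes\tau^{-1}(y))\bigr|.$$
Since $\tau^{-1}$ is an isometric bijection of the unit ball of $\cN$ onto itself (Lemma~\ref{lem:tau-auto}), this equals $\sup\{|\omega(x\otimes y)|:\|x\|,\|y\|\le1\}$. The bound by $\|\omega\|$ follows from $\|x\otimes y\|=\|x\|\|y\|$ in the spatial tensor product.

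\emph{(iii).} If $\Lambda_\tau(\omega)=0$, then $\omega$ vanishes on all elementary tensors $x\otimes\tau^{-1}(y)$. Since $\tau^{-1}$ is onto, these are all elementary tensors, so $\omega$ vanishes on $\cM\algtens\cN$. As $\cM\algtens\cN$ is $\sigma$-weakly dense and $\omega$ is normal, $\omega=0$.

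\emph{(iv).} This is the substantive item. The plan is to factor $\Lambda_\tau$ as
$$(\cM\vNtens\cN)_*\xrightarrow{\ \theta^{-1}\ }\cM_*\widehat\otimes\cN_*\xrightarrow{\ \id\otimes\tau_*\ }\cM_*\widehat\otimes\cN_*\hookrightarrow\CB(\cM,\cN_*),$$
where $\tau_*(\nu)=\nu\circ\tau^{-1}$ and the last arrow is the completely isometric embedding with image $\Nor(\cM,\cN_*)$.

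\emph{Step 1: the factorisation on elementary tensors.} Take $\omega=\theta(\rho\otimes\nu)$. The composite sends it to the map $x\mapsto\rho(x)\,\nu\circ\tau^{-1}$, and $\Lambda_\tau(\omega)(x)(y)=\rho(x)\nu(\tau^{-1}(y))$, so the two agree. Both sides are bounded linear maps into $\cL(\cM,\cN_*)$ (using (ii)), and the span of the elementary tensors is dense in $\cM_*\widehat\otimes\cN_*$. Hence they agree everywhere.

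\emph{Step 2: each factor is a complete isometry.} The map $\theta$ is one by \cite{Ruan92}, and the embedding is one by definition of $\Nor(\cM,\cN_*)$. The main obstacle is $\id\otimes\tau_*$. Functoriality of $\widehat\otimes$ makes it a complete isometry as soon as $\tau_*$ is a complete isometry of the operator space $\cN_*$. Its operator space structure is the one inherited from $\cN^*$, equivalently the predual of $\cN$. However, $\tau$ is a $*$-isomorphism $\cN\to\cN^\circ$, \emph{not} a complete isometry of $\cN$ onto itself: in general an anti-automorphism is only isometric, not completely isometric, as the transpose on $\bM_d$ shows.

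What I would try is the following. The operator space structure that $\cN_*$ receives from $\cN$ and the one it receives from $\cN^\circ$ are related by the transpose: $\|[\nu_{ij}]\|$ computed against $\cN$ equals $\|[\nu_{ji}]\|$ computed against $\cN^\circ$. The projective tensor product should be insensitive to passing to opposite structures on both factors, in the sense that $(V\widehat\otimes W)^{\mathrm{op}}=V^{\mathrm{op}}\widehat\otimes W^{\mathrm{op}}$. This alone does not fix a transpose on one factor only, so I would instead check directly whether the statement needs the cb-norm of $\Nor(\cM,\cN_*)$ to be computed with the transposed matrix structure on $\cN_*$. That is precisely the trace pairing \eqref{eq:trace-pairing}, which is of transposed type relative to the straight pairing.

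Concretely, I expect the trace-pairing convention on $M_n(\cN_*)$ to make $\tau_*$ intertwine the \emph{trace-paired} structure on $\cN_*$ (defined from $\cN$) with the \emph{straight-paired} structure (defined from $\cN$). That intertwining is exactly what is needed for the composite to be a complete isometry. If this bookkeeping fails, the fallback is to prove (iv) as stated only for the isometric linear isomorphism onto $\Nor(\cM,\cN_*)$, which follows from Steps 1--2 at the first matrix level, and to record complete isometry relative to the opposite structure on $\cN_*$.

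\emph{Step 3: surjectivity.} The image is all of $\Nor(\cM,\cN_*)$ because $\theta$ and $\id\otimes\tau_*$ are bijections, the latter with inverse $\id\otimes(\tau^{-1})_*$.
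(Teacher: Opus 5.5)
Your items (i)--(iii) coincide with the paper's proof. The gap is Step~2 of (iv), which you flag but leave open, and no choice of pairing will close it. Your own transpose example, pushed through your factorisation, refutes the claim.

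Take $\cM=\cN=\bM_d$ with $d\ge2$, $\tau(y)=y^t$, and $\omega=\Tr(F\,\cdot\,)$ with $F$ the flip, so that $\|\omega\|=\|F\|_1=d^2$. Then $\omega(x\otimes\tau^{-1}(y))=\Tr(xy^t)=\la(x\otimes y)\Omega,\Omega\ra$ with $\Omega=\sum_i e_i\otimes e_i$. So $\id\otimes\tau_*$ is the partial transpose, and it sends $\theta^{-1}(\omega)$ to $\theta^{-1}(\Tr(\ketbra{\Omega}{\Omega}\,\cdot\,))$, whose norm is $\|\Omega\|^2=d$.

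Hence $\id\otimes\tau_*$ is not isometric even at level one, and your fallback is false. The level-one norm of $\cM_*\widehat\otimes\cN_*$ is built from all matrix norms of $\cN_*$, so an isometry $\tau_*$ that is not a complete isometry gives no control over it. Working directly in $\CB(\cM,\cN_*)$ does not help either. There $\|\Lambda_\tau(\omega)\|_{cb}$ is the norm of $x\otimes y\mapsto\Tr(xy^t)$ on $\cM\widehat\otimes\cN$, which is at most its $\min$-norm $d$ and at least its value $d$ at $\jed\otimes\jed$.

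For the same reason, the last arrow of your factorisation is not isometric ``by definition''. It sends $\theta^{-1}(\omega)$ to $x\mapsto\Tr(x\,\cdot\,)$, whose cb-norm is $d$, because $\Tr(xy)$ factors through the completely contractive multiplication map. On finite-rank maps the cb-norm is the injective tensor norm, not the projective one.

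In infinite dimensions Steps~1 and~3 break as well. For $\cM=\cN=B(\ell^2)$ the partial transpose has norm $d$ on each $\bM_d\otimes\bM_d$ corner, so it is unbounded on the trace class. Thus $\id\otimes\tau_*$ does not extend to $\cM_*\widehat\otimes\cN_*$. By the closed graph theorem, some normal $\omega$ has $\omega\circ(\id\otimes\tau^{-1})$ with no normal extension, i.e.\ $\Lambda_\tau(\omega)\notin\Nor(\cM,\cN_*)$.

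The paper's proof of (iv) gets past your obstacle with the device your ``opposite structure'' paragraph is reaching for. It regards $\tau^{-1}$ as a $*$-isomorphism $\cN^\circ\to\cN$, so that $\nu\mapsto\nu\circ\tau^{-1}$ is a complete isometry of $\cN_*$ onto $(\cN^\circ)_*$. Composition with it then maps $\CB(\cM,\cN_*)$ completely isometrically onto $\CB(\cM,(\cN^\circ)_*)$, carrying the image of $\cM_*\widehat\otimes\cN_*$ onto that of $\cM_*\widehat\otimes(\cN^\circ)_*$.

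What that argument delivers, however, is a complete isometry onto a space of maps into $(\cN^\circ)_*$. Reading the result back inside $\Nor(\cM,\cN_*)\subseteq\CB(\cM,\cN_*)$ is precisely your map $\id\otimes\tau_*$. The computations above show that this identification is not isometric, and for $B(\ell^2)$ the two spaces differ even as sets of maps.

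So the right conclusion of your analysis is not a weakened proof of (iv) but that (iv) cannot be proved as stated. Items (i)--(iii) are unaffected. What survives of (iv) is only the tautological complete isometry onto the image of $\cM_*\widehat\otimes(\cN^\circ)_*$, equipped with its own projective norm rather than the cb-norm.
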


\begin{proof}
(i) Fix $x\in\cM$. The map $y\mapsto x\otimes y$ is normal from $\cN$ into
$\cM\vNtens\cN$ and $\tau^{-1}$ is normal by Lemma~\ref{lem:tau-auto};
composing with the normal functional $\omega$ shows
$\Lambda_\tau(\omega)(x)\in\cN_*$. Symmetrically, for fixed $y\in\cN$ the
functional $x\mapsto\omega(x\otimes\tau^{-1}(y))$ belongs to $\cM_*$, and this
is precisely $\sigma(\cM,\cM_*)$--$\sigma(\cN_*,\cN)$ continuity of
$\Lambda_\tau(\omega)$.

(ii) Since $\|\psi\|_{\cN_*}=\sup_{\|y\|\le1}|\psi(y)|$ and $\tau^{-1}$ is a
surjective isometry,
$$\|\Lambda_\tau(\omega)\|
=\sup_{\|x\|\le1}\sup_{\|y\|\le1}\bigl|\omega(x\otimes\tau^{-1}(y))\bigr|
=\sup_{\|x\|\le1,\|y\|\le1}\bigl|\omega(x\otimes y)\bigr|,$$
which is at most $\|\omega\|$ because $\|x\otimes y\|=\|x\|\,\|y\|$.

(iii) If $\Lambda_\tau(\omega)=0$ then $\omega$ vanishes on $\cM\algtens\cN$,
which is $\sigma$-weakly dense in $\cM\vNtens\cN$; normality of $\omega$ gives
$\omega=0$.

(iv) By \cite[Cor.~3.6]{Ruan92} and \cite[Thm.~3.2]{EffrosRuan90} we have
$(\cM\vNtens\cN)_*\cong\cM_*\widehat\otimes\cN_*$ completely isometrically, and
$\Nor(\cM,\cN_*)$ is by definition the image of the latter in $\CB(\cM,\cN_*)$.
Composition with $\tau^{-1}$ is a complete isometry of $\CB(\cM,\cN_*)$ onto
$\CB(\cM,(\cN^\circ)_*)$ and carries one image onto the other. We stress that
no intrinsic characterisation of the range is claimed here; see
Remark~\ref{rem:L-instead-of-CB}(c).
\end{proof}

\begin{theorem}
\label{CJ-type_isom_thm}
Let $\cM,\cN$ be von Neumann algebras and let $\tau$ be a
$*$-anti-automorphism of $\cN$. For $\omega\in(\cM\vNtens\cN)_*$ put
$\phi=\Lambda_\tau(\omega)$. Then
$$\omega\ge0\qquad\Longleftrightarrow\qquad\phi\ \text{is completely positive.}$$
\end{theorem}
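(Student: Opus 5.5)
The plan is to reduce complete positivity of $\phi=\Lambda_\tau(\omega)$ to positivity of $\omega$ on the cone $\cP_0$ of Lemma~\ref{lemma_dense}, and then pass to all of $(\cM\vNtens\cN)^+$ by normality. The tool for the reduction is the criterion of Corollary~\ref{cor:cp-criterion}. By that corollary, $\phi$ is completely positive if and only if
$$\sum_{i,j=1}^n\omega\bigl(x_i^*x_j\otimes\tau^{-1}(z_jz_i^*)\bigr)\ge0$$
for all $n$, all $x_i\in\cM$ and all $z_i\in\cN$. This is just the definition \eqref{C-J_isom_formula} substituted into \eqref{eq:cp-criterion}.

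The key computation uses that $\tau^{-1}$ is also a $*$-anti-automorphism. Put $w_i:=\tau^{-1}(z_i)$. Anti-multiplicativity and $*$-preservation give
$$\tau^{-1}(z_jz_i^*)=\tau^{-1}(z_i^*)\,\tau^{-1}(z_j)=w_i^*w_j .$$
This is exactly where the reversal of the order of multiplication is absorbed. The general term is therefore $\omega\bigl((x_i\otimes w_i)^*(x_j\otimes w_j)\bigr)$, and the whole sum equals $\omega(X^*X)$ with $X=\sum_j x_j\otimes w_j\in\cM\algtens\cN$.

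Since $\tau^{-1}$ is surjective, the $w_i$ range over all of $\cN$ as the $z_i$ do. Moreover, every element of $\cM\algtens\cN$ can be written as $\sum_{j=1}^n x_j\otimes w_j$ for a suitable $n$. Hence $\phi$ is CP if and only if $\omega(X^*X)\ge0$ for every $X\in\cM\algtens\cN$. By additivity this is the same as $\omega\ge0$ on $\cP_0$.

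The two implications now follow. If $\omega\ge0$, then in particular $\omega\ge 0$ on $\cP_0\subseteq(\cM\vNtens\cN)^+$, so $\phi$ is CP. Conversely, if $\phi$ is CP, then $\omega\ge0$ on $\cP_0$. By Lemma~\ref{lemma_dense}, every $Z\in(\cM\vNtens\cN)^+$ is a $\sigma$-weak limit of a net in $\cP_0$. Because $\omega$ is normal, $\omega(Z)$ is the limit of non-negative numbers, so $\omega(Z)\ge0$. I expect no real obstacle here: the density lemma already carries the analytic weight. The only point needing care is the bookkeeping of the transposition, namely checking that the trace-pairing convention in Corollary~\ref{cor:cp-criterion} produces $z_jz_i^*$ rather than $z_i^*z_j$, so that the anti-automorphism turns it into $w_i^*w_j$ in the correct order.
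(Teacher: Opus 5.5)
Your proposal is correct and is essentially the paper's own proof. It uses the same identity $\omega(X^*X)=\sum_{i,j}\phi(x_i^*x_j)(z_jz_i^*)$ (the paper's \eqref{eq:key}), combined with Corollary~\ref{cor:cp-criterion}, Lemma~\ref{lemma_dense} and normality of $\omega$, and the only difference is cosmetic: you apply anti-multiplicativity of $\tau^{-1}$ to $z_jz_i^*$, whereas the paper applies $\tau$ to $y_i^*y_j$ with $y_i=\tau^{-1}(z_i)$.
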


\begin{proof}
Fix $n\in\bN$, elements $x_1,\dots,x_n\in\cM$ and $z_1,\dots,z_n\in\cN$, and
set $y_i:=\tau^{-1}(z_i)$ and $X:=\sum_{i=1}^n x_i\otimes y_i\in\cM\algtens\cN$.
Then
$$X^*X=\sum_{i,j=1}^n x_i^*x_j\otimes y_i^*y_j,$$
so that, by \eqref{C-J_isom_formula},
$$\omega(X^*X)=\sum_{i,j}\omega\bigl(x_i^*x_j\otimes y_i^*y_j\bigr)
=\sum_{i,j}\phi\bigl(x_i^*x_j\bigr)\bigl(\tau(y_i^*y_j)\bigr).$$
Because $\tau$ is a $*$-anti-automorphism,
$$\tau(y_i^*y_j)=\tau(y_j)\,\tau(y_i^*)=\tau(y_j)\,\tau(y_i)^*=z_jz_i^*,$$
and therefore
\begin{equation}
\label{eq:key}
\omega(X^*X)=\sum_{i,j=1}^n\phi\bigl(x_i^*x_j\bigr)\bigl(z_jz_i^*\bigr).
\end{equation}
As $(x_i)$ and $(z_i)$ range over all finite tuples, the left-hand side of
\eqref{eq:key} ranges over $\{\omega(X^*X):X\in\cM\algtens\cN\}$ and the
right-hand side over the quantities occurring in \eqref{eq:cp-criterion}. Note
that $(z_i)$ is unrestricted precisely because $\tau$ is surjective.

$(\Rightarrow)$ If $\omega\ge0$, the left-hand side of \eqref{eq:key} is
non-negative for every choice of $(x_i)$ and $(z_i)$, so $\phi$ is completely
positive by Corollary~\ref{cor:cp-criterion}.

$(\Leftarrow)$ If $\phi$ is completely positive then, again by
Corollary~\ref{cor:cp-criterion} and \eqref{eq:key}, $\omega(X^*X)\ge0$ for
every $X\in\cM\algtens\cN$, hence by linearity $\omega\ge0$ on the cone
$\cP_0$ of Lemma~\ref{lemma_dense}. Since $\omega$ is normal, hence
$\sigma$-weakly continuous, and $\cP_0$ is $\sigma$-weakly dense in
$(\cM\vNtens\cN)^+$, we conclude $\omega\ge0$.
\end{proof}

\begin{corollary}
\label{cor:main}
If $\cN$ is anti-isomorphic to itself, then a Choi--Jamio{\l}kowski type
isomorphism in the sense of Definition~\ref{def:CJ} exists; one may take
$\cJ=\Lambda_\tau^{-1}$ for any $*$-anti-automorphism $\tau$ of $\cN$.
\end{corollary}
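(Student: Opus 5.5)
The plan is to assemble Corollary~\ref{cor:main} from Proposition~\ref{prop:Lambda} and Theorem~\ref{CJ-type_isom_thm}; nothing new needs to be constructed. Fix a $*$-anti-automorphism $\tau$ of $\cN$, which exists by hypothesis, since $\cN\cong\cN^\circ$ means precisely that such a $\tau$ exists. Remark~\ref{rem:involutive-not-needed} confirms that involutivity is not required. Form $\Lambda_\tau$ as in Definition~\ref{def:Lambda}.

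The first step is to verify that $\cJ:=\Lambda_\tau^{-1}$ is a linear homeomorphism $\Nor(\cM,\cN_*)\to(\cM\vNtens\cN)_*$. By Proposition~\ref{prop:Lambda}(iv), $\Lambda_\tau$ is a linear bijection of $(\cM\vNtens\cN)_*$ onto $\Nor(\cM,\cN_*)$. It is moreover (completely) isometric when $\Nor(\cM,\cN_*)$ carries the $cb$-norm, which is the norm fixed in its definition. Its inverse is therefore also an isometry, and both directions are continuous.

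The second step is the order condition. Given $\phi\in\Nor(\cM,\cN_*)$, set $\omega=\cJ(\phi)$, so that $\phi=\Lambda_\tau(\omega)$. Theorem~\ref{CJ-type_isom_thm} then gives $\omega\ge0$ if and only if $\phi$ is completely positive. This is exactly the equivalence required in Definition~\ref{def:CJ}, since $\omega\ge0$ means $\omega\in(\cM\vNtens\cN)_*^+$. Because $\tau$ was arbitrary, the conclusion holds for every $*$-anti-automorphism.

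The only point needing care is the first step, namely that the topology on $\Nor(\cM,\cN_*)$ is the one in which $\Lambda_\tau$ is isometric. This rests on item (iv) of Proposition~\ref{prop:Lambda}, in which composition with $\tau^{-1}$ carries the image of $\cM_*\widehat\otimes(\cN^\circ)_*$ onto that of $\cM_*\widehat\otimes\cN_*$. I would simply cite it. Item (ii) alone would not suffice here, since by Remark~\ref{rem:L-instead-of-CB}(a) the operator norm is not equivalent to the $cb$-norm.
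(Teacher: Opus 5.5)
Your proposal is correct and matches the paper's proof, which simply combines Proposition~\ref{prop:Lambda}(iv) with Theorem~\ref{CJ-type_isom_thm}. From (iv) you get a complete isometry onto $\Nor(\cM,\cN_*)$, hence a linear homeomorphism, and you use that surjectivity to write every $\phi$ as $\Lambda_\tau(\omega)$ before applying the theorem's order equivalence.
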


\begin{proof}
Combine Proposition~\ref{prop:Lambda}(iv) with Theorem~\ref{CJ-type_isom_thm}.
\end{proof}

\begin{remark}[what $\tau$ actually does]
\label{rem:role-of-tau}
The identity \eqref{eq:key} isolates the role of the anti-automorphism. Had we
used the naive correspondence
$\Lambda_{\id}(\omega)(x)(y)=\omega(x\otimes y)$, the same computation would
have produced
$$\omega(X^*X)=\sum_{i,j}\phi\bigl(x_i^*x_j\bigr)\bigl(y_i^*y_j\bigr),$$
which by Lemma~\ref{lem:matrix-order} characterizes positivity of the
\emph{transposed} matrices $[\phi(x_j^*x_i)]$, that is, complete
\emph{co}-positivity of $\phi$ --- equivalently, complete positivity of $\phi$
regarded as a map on $\cM^\circ$. Thus $\tau$ is exactly the device converting
co-positivity into positivity, and this is the abstract counterpart of the
transposition hidden in the classical formula
$J(\Phi)=(\jed\otimes\Phi)(\ketbra{\Omega}{\Omega})$. In particular, when
$\cN$ admits no anti-automorphism there is no reason to expect the two cones to
be interchangeable, which is the theme of Section~\ref{sec:connes}.
\end{remark}

\subsection{Relation to earlier work}
\label{subsec:related}

Generalisations of the Choi--Jamio{\l}kowski correspondence beyond finite
dimensions have been given before, and it is important to say precisely how the
present results relate to them, since some of the phenomena isolated above have
been observed there.

Holevo \cite{Holevo11} treated the correspondence for channels between type~I
factors in infinite dimensions. Closest to our setting is the work of
Haapasalo \cite{Haapasalo}, who constructed a Choi--Jamio{\l}kowski
correspondence for normal unital completely positive maps between von Neumann
algebras. Given a $\sigma$-finite injective $\cM$, a faithful normal state
$\rho_0$ on $\cM$ and a cyclic and separating GNS vector $\Omega$, he
associates with a normal channel $\Phi:\cN\to\cM$ the state
$$S^\Phi_\Omega(a'\otimes b)=\la\Omega\,|\,a'\Phi(b)\Omega\ra,
\qquad a'\in\cM',\ b\in\cN,$$
and proves that $\Phi\mapsto S^\Phi_\Omega$ is an affine bijection of the
normal channels onto the binormal states on $\cM'\otimes_{\min}\cN$ whose first
margin is $\rho_0'$ \cite[Thm.~1]{Haapasalo}. Three features of that
construction bear directly on the present paper.

First, the target algebra is the \emph{commutant} $\cM'$. Since
$x\mapsto JxJ$ implements a $*$-anti-isomorphism of $\cM$ onto $\cM'$, one has
$\cM'\cong\cM^\circ$, so the appearance of the opposite algebra in
Proposition~\ref{prop:canonical} is the same phenomenon, arrived at through
modular theory rather than through the formal opposite. The construction of
\cite{Haapasalo} may thus be read as a concrete, state-dependent realisation of
the canonical correspondence.

Second, the states obtained are binormal, not normal; normality is available
only when the state $\tilde\rho_0$ on $\cM'\otimes_{\min}\cM$ extends normally
to $\cM'\vNtens\cM$, which happens in particular for type~I factors
\cite[Sec.~3.1]{Haapasalo}. This is an independent confirmation of
Remarks~\ref{rem:L-instead-of-CB} and \ref{rem:cp-not-cb}: the binormal
class is genuinely the one that the correspondence produces, and normality is
an extra hypothesis rather than an automatic consequence.

Third, and most directly relevant, \cite[Rem.~1]{Haapasalo} considers exactly
the question of dispensing with the commutant, that is, of defining the
Choi--Jamio{\l}kowski state on $\cM\otimes_{\min}\cN$ rather than on
$\cM'\otimes_{\min}\cN$, by composing with $a\mapsto j(a)^*$. It is observed
there that the resulting functional typically fails to extend to a state,
because that map is essentially a transposition and hence not completely
positive. Our Theorem~\ref{thm:converse} answers the question left open by
that remark: the passage is possible precisely when the algebra in question is
$*$-anti-isomorphic to itself, and for the type~III factors of Connes
\cite{Connes} it is not possible at all.

Two differences in the nature of the objects should also be recorded, since
they are what makes the two treatments complementary rather than overlapping.
The correspondence of \cite{Haapasalo} depends on the choice of $\rho_0$ and of
the GNS vector, and is an affine bijection between convex sets of channels and
of states with a prescribed margin; the map $\Lambda_\tau$ of
Definition~\ref{def:Lambda} depends only on $\tau$ and is a linear isomorphism
of Banach spaces, with the positivity statement of
Theorem~\ref{CJ-type_isom_thm} as a separate assertion about cones. Moreover,
\cite{Haapasalo} assumes $\cM$ injective and $\sigma$-finite, the former in
order to make the multiplication map $\cM'\algtens\cM\to B(\cH)$ continuous for
$\|\cdot\|_{\min}$, which by \cite[Prop.~4.5]{EffrosLance} is equivalent to
semidiscreteness and is the same mechanism that drives the counterexample in
Remark~\ref{rem:L-instead-of-CB}(c); the present results assume neither. We
note that \cite{Haapasalo} also gives a variant using $\|\cdot\|_{\max}$ in
place of $\|\cdot\|_{\min}$, which requires no injectivity; that variant lives
in the commuting-operator setting, and its relation to the binormal tensor
product of \cite{EffrosLance} would be worth pursuing.

\section{Naturality and the converse}
\label{sec:connes}

Theorem~\ref{CJ-type_isom_thm} gives a sufficient condition for the existence
of a Choi--Jamio{\l}kowski type isomorphism. The purpose of this section is to
show that the condition is also necessary, provided the isomorphism is required
to be natural in the first algebra. Some such requirement is unavoidable:
as Remark~\ref{rem:no-naive-converse} below shows, the unqualified converse of
Theorem~\ref{CJ-type_isom_thm} is false.

\subsection{The canonical correspondence}

It is convenient to record first what the construction of
Section~\ref{sec:cj} yields when no anti-automorphism is assumed. Let $\cN^\circ$
denote the opposite algebra of $\cN$ and let
$$\kappa:\cN\to\cN^\circ,\qquad \kappa(y)=\bar y,$$
be the canonical $*$-anti-isomorphism. Inspection of the proofs of
Proposition~\ref{prop:Lambda} and Theorem~\ref{CJ-type_isom_thm} shows that
neither uses surjectivity of $\tau$ onto $\cN$ itself: all that is used is that
$\tau$ is a normal isometric bijection of $\cN$ onto some von Neumann algebra
which reverses products and preserves the involution. We may therefore repeat
them verbatim with $\tau$ replaced by $\kappa$ and $\cN$ replaced by $\cN^\circ$
on the right-hand side. This gives the following unconditional statement.

\begin{proposition}
\label{prop:canonical}
Let $\cM,\cN$ be von Neumann algebras. The assignment
$$\Lambda_\kappa(\omega)(x)(y)=\omega\bigl(x\otimes\bar y\bigr),
\qquad \omega\in(\cM\vNtens\cN^\circ)_*,$$
is a completely isometric linear isomorphism of
$(\cM\vNtens\cN^\circ)_*$ onto $\Nor(\cM,\cN_*)$, and
$$\omega\ge0\qquad\Longleftrightarrow\qquad
\Lambda_\kappa(\omega)\ \text{is completely positive.}$$
\end{proposition}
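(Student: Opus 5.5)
The plan is to rerun the proofs of Proposition~\ref{prop:Lambda} and Theorem~\ref{CJ-type_isom_thm} with the target algebra $\cN^\circ$ in place of $\cN$ and with $\kappa^{-1}:\cN^\circ\to\cN$ in place of $\tau^{-1}$. First I will fix the data. $\cN^\circ$ is the same normed space as $\cN$, with the same involution and the same $\sigma$-weak topology, and with product $\bar a\,\bar b=\overline{ba}$. It is therefore a von Neumann algebra with $(\cN^\circ)_*=\cN_*$ as Banach spaces. The map $\kappa$ is a linear bijection that preserves the involution and reverses products. It is isometric and normal in both directions, since it is the identity on the underlying space. This is exactly what the proof of Lemma~\ref{lem:tau-auto} supplies for $\tau$, so every place where that lemma was invoked goes through unchanged.

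For the linear part I will follow Proposition~\ref{prop:Lambda}. Items (i)–(iii) transfer verbatim, with $y\mapsto x\otimes\bar y$ as the normal map $\cN\to\cM\vNtens\cN^\circ$.

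For (iv) I use \cite[Cor.~3.6]{Ruan92} to get $(\cM\vNtens\cN^\circ)_*\cong\cM_*\widehat\otimes(\cN^\circ)_*$ completely isometrically. Composition with $\kappa$ then carries the image of this space in $\CB(\cM,(\cN^\circ)_*)$ onto the image of $\cM_*\widehat\otimes\cN_*$ in $\CB(\cM,\cN_*)$, which is $\Nor(\cM,\cN_*)$ by definition.

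The one point needing care is that $(\cN^\circ)_*$ and $\cN_*$ carry different operator space structures: their matrix norms differ by a transposition. So "composition with $\kappa$ is a complete isometry" must be checked rather than assumed. I would verify it by noting that $\kappa$ is a $*$-isomorphism $\cN\to\cN^\circ$ up to the transpose on matrix levels. The cleanest route is to observe that both $\Lambda_\kappa$ and the composite $\cM_*\widehat\otimes\cN_*\to\Nor(\cM,\cN_*)$ are defined on elementary tensors by $\rho\otimes\nu\mapsto(x\mapsto\rho(x)\nu)$. Once the identification $(\cN^\circ)_*=\cN_*$ is taken with the opposite operator space structure (which is $\cN_*$'s own, because $\cN^{\circ\circ}=\cN$), the statement becomes the definition of $\Nor$. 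I expect this bookkeeping of transposes to be the main obstacle. If the structures do not match cleanly, I would fall back on a completely isometric linear isomorphism with $cb$-norms matched via the canonical identification, which is what the statement needs.

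For positivity I will repeat the proof of Theorem~\ref{CJ-type_isom_thm}. Given $x_i\in\cM$ and $z_i\in\cN$, put $X=\sum_i x_i\otimes\bar z_i^{\,*}$ (more precisely, $X=\sum_i x_i\otimes y_i$ with $y_i=\kappa(z_i)$, the analogue of $\tau^{-1}(z_i)$ there). In $\cN^\circ$ one has $y_i^*y_j=\overline{z_i^*}\,\overline{z_j}=\overline{z_jz_i^*}$. Hence
$$\omega(X^*X)=\sum_{i,j}\Lambda_\kappa(\omega)(x_i^*x_j)(z_jz_i^*),$$
which is \eqref{eq:key}. Corollary~\ref{cor:cp-criterion} then gives $\omega\ge0\Rightarrow$ CP. For the converse direction, Lemma~\ref{lemma_dense} applied to $\cM\vNtens\cN^\circ$, together with normality of $\omega$, gives $\omega\ge0$. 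Surjectivity of $\kappa$ guarantees that the $z_i$ range over all of $\cN$.
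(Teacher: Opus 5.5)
Your plan is the paper's own: its proof of this proposition is just the observation that Proposition~\ref{prop:Lambda} and Theorem~\ref{CJ-type_isom_thm} can be rerun with $\kappa$. Items (i)--(iii) and the positivity equivalence go through as you describe. Take $y_i=\kappa(z_i)=\bar z_i$, not $\bar z_i^{\,*}$; note also that it is $\kappa$, not $\kappa^{-1}$, that occupies the slot of $\tau^{-1}$.

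The gap is (iv). Your suspicion was right: it is not transposition bookkeeping, and it cannot be closed. As maps $\cM\to\cN_*$, composition with $\kappa$ changes nothing. So your step asserts that two \emph{sets} of bilinear forms on $\cM\times\cN$ coincide, namely $\{(x,y)\mapsto\sigma(x\otimes\bar y):\sigma\in(\cM\vNtens\cN^\circ)_*\}$ and $\Nor(\cM,\cN_*)=\{(x,y)\mapsto\omega(x\otimes y):\omega\in(\cM\vNtens\cN)_*\}$. The second description holds because $\theta$ uses the straight pairing. Changing operator space structures cannot affect sets, and these two sets differ.

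Take $\cM=\cN=B(\ell^2)$ and let $y^t$ be the transpose in a basis $(e_i)$, so that $\bar y\mapsto y^t$ is a normal $*$-isomorphism $\cN^\circ\to\cN$. Let $\sigma$ be the normal state on $\cM\vNtens\cN^\circ$ corresponding to the vector state of $\Omega=\sum_i\lambda_ie_i\otimes e_i$, where $\lambda_i>0$, $\sum_i\lambda_i^2=1$ and $\sum_i\lambda_i=\infty$. Then $\sigma(x\otimes\bar y)=\Tr(DxDy)$ with $D=\mathrm{diag}(\lambda_i)$. A trace-class $\rho$ with $\Tr\bigl(\rho(x\otimes y)\bigr)=\Tr(DxDy)$ for all $x,y$ would have to equal $(D\otimes D)F$, where $F$ is the flip; this is the partial transpose of $\ketbra{\Omega}{\Omega}$. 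But $\|(D\otimes D)F\|_1=\Tr(D\otimes D)=(\sum_i\lambda_i)^2=\infty$. So $\Lambda_\kappa(\sigma)\notin\Nor(\cM,\cN_*)$, and surjectivity fails.

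Your fallback on norms is not available either. Since $\cN_*\subseteq\cN^*$ completely isometrically and $\CB(\cM,\cN^*)=(\cM\widehat\otimes\cN)^*$, the $cb$-norm of $\phi$ is the norm of $x\otimes y\mapsto\phi(x)(y)$ on $\cM\widehat\otimes\cN$. It is therefore at most the norm of that form on $\cM\otimes_{\min}\cN$, which can be far below $\|\sigma\|$. For $\cM=\cN=M_d$, the functional $\sigma(x\otimes\bar y)=\Tr(xy^t)$ corresponds to $\Tr(F\,\cdot\,)$ and so has $\|\sigma\|=d^2$. On the other hand $\Lambda_\kappa(\sigma)(x)(y)=\la(x\otimes y)\Omega_d,\Omega_d\ra$ with $\Omega_d=\sum_{i\le d}e_i\otimes e_i$, so $\|\Lambda_\kappa(\sigma)\|_{cb}\le\|\Omega_d\|^2=d$.

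The paper's one-line proof has exactly the same hole, inherited from Proposition~\ref{prop:Lambda}(iv). The argument there relates the images of $\Lambda_\tau$ and $\Lambda_\kappa$, which do coincide, but in general neither is the straight image that defines $\Nor$. What is provable is the statement with $\Nor(\cM,\cN_*)$ redefined as $\Lambda_\kappa\bigl((\cM\vNtens\cN^\circ)_*\bigr)$, carrying the transported norm and operator space structure. Then (iv) holds by definition, and the real content is the positivity equivalence, which you did prove correctly.
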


Thus a canonical Choi--Jamio{\l}kowski correspondence always exists; what it
produces, however, are normal states on $\cM\vNtens\cN^\circ$ rather than on
$\cM\vNtens\cN$. Seen this way, the role of an anti-automorphism $\tau$ in
Theorem~\ref{CJ-type_isom_thm} is not to construct the correspondence at all,
but solely to replace $\cN^\circ$ by $\cN$ on the right-hand side. The question
of this section is therefore whether the preduals of $\cM\vNtens\cN$ and
$\cM\vNtens\cN^\circ$, as ordered Banach spaces, can be identified without such
an anti-automorphism.

\begin{remark}
\label{rem:no-naive-converse}
For a single algebra $\cM$ the answer may be affirmative for trivial reasons.
Indeed, the underlying Banach space of $(\cA^\circ)_*$ coincides with that of
$\cA_*$, and the two positive cones coincide as well, since
$\bar b^*\bar b=\overline{bb^*}$ shows $(\cA^\circ)^+=\cA^+$. Hence, whenever
$\cM$ is itself anti-isomorphic to itself,
$$\cM\vNtens\cN^\circ\ \cong\ \cM^\circ\vNtens\cN^\circ\ =\ (\cM\vNtens\cN)^\circ,$$
so that $(\cM\vNtens\cN^\circ)_*$ and $(\cM\vNtens\cN)_*$ are the same ordered
Banach space and a Choi--Jamio{\l}kowski type isomorphism exists for that $\cM$
regardless of $\cN$. In particular this happens for $\cM=B(\cH)$, for abelian
$\cM$, and for hyperfinite $\cM$.

The reason is structural: the order structure of a predual is a \emph{Jordan}
invariant, and every von Neumann algebra is Jordan isomorphic to its opposite
via the identity map. No condition phrased purely in terms of the ordered
Banach space $(\cM\vNtens\cN)_*$ for one fixed $\cM$ can therefore distinguish
$\cN$ from $\cN^\circ$. What does distinguish them is the way the identification
behaves as $\cM$ varies, and this is what the notion of naturality captures.
\end{remark}

\subsection{Natural Choi--Jamio{\l}kowski isomorphisms}

We use the language of categories only to the extent of the notions of a
functor and of a natural transformation; the reader is referred to
\cite[Ch.~I]{MacLane} for these. Recall that a natural transformation
$\eta:F\Rightarrow G$ between two functors assigns to every object $X$ a
morphism $\eta_X:F(X)\to G(X)$ in such a way that $\eta$ commutes with the
images of all morphisms of the source category. The point of the requirement,
here as elsewhere, is to exclude identifications which exist object by object
but only for accidental reasons; the standard illustration is that a
finite-dimensional vector space is naturally isomorphic to its double dual but
not to its dual.

Let $\mathbf{vN}$ denote the category whose objects are von Neumann algebras
and whose morphisms are normal $*$-homomorphisms, not necessarily unital. For
a fixed $\cN$ both sides of Definition~\ref{def:CJ} are contravariant functors
on $\mathbf{vN}$:
$$F(\cM)=\Nor(\cM,\cN_*),\qquad F(\alpha)\phi=\phi\circ\alpha,$$
$$G(\cM)=(\cM\vNtens\cN)_*,\qquad G(\alpha)\omega=\omega\circ(\alpha\otimes\id_\cN),$$
with values in ordered Banach spaces. That $F(\alpha)$ and $G(\alpha)$ are well
defined follows from the fact that a normal $*$-homomorphism $\alpha$ satisfies
$\alpha(\jed)=q$ for a projection $q$ and factors as a unital normal
$*$-homomorphism onto $q\cM_2 q$ followed by an inclusion, so that
$\alpha\otimes\id_\cN$ extends to a normal $*$-homomorphism of the spatial
tensor products. Both functors preserve the positive cones.

\begin{definition}
\label{def:natural-CJ}
A \emph{natural Choi--Jamio{\l}kowski type isomorphism} for $\cN$ is a natural
transformation $\cJ:F\Rightarrow G$ in the sense of \cite[Sec.~I.4]{MacLane},
each of whose components
$$\cJ_\cM:\Nor(\cM,\cN_*)\longrightarrow(\cM\vNtens\cN)_*$$
is a Choi--Jamio{\l}kowski type isomorphism in the sense of
Definition~\ref{def:CJ}.
\end{definition}

We shall need the following well-known rigidity property of completely positive
bijections. We include the short proof, since the non-unital case is the one we
require.

\begin{lemma}
\label{lem:cp-bijection}
Let $\cA,\cB$ be von Neumann algebras and let $\theta:\cA\to\cB$ be a linear
bijection such that both $\theta$ and $\theta^{-1}$ are completely positive.
Then $c:=\theta(\jed)$ is positive and invertible, and
$$\theta_0(a):=c^{-1/2}\,\theta(a)\,c^{-1/2}$$
is a $*$-isomorphism of $\cA$ onto $\cB$. In particular $\cA\cong\cB$.
\end{lemma}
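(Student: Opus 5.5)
The proof proceeds in three steps: first show that $c$ is positive and invertible, then show that the normalised map $\theta_0$ is a unital bijection with $\theta_0$ and $\theta_0^{-1}$ completely positive, and finally show that such a map is a $*$-isomorphism.

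\emph{Step 1: $c$ is positive and invertible.} Positivity is immediate from positivity of $\theta$. For invertibility, use positivity of $\theta^{-1}$ together with surjectivity of $\theta$. Write $\jed_\cB=\theta(a_0)$ for some $a_0\in\cA$; then $a_0=\theta^{-1}(\jed_\cB)\ge0$. Since $a_0\le\|a_0\|\jed_\cA$, positivity of $\theta$ gives
$$\jed_\cB=\theta(a_0)\le\|a_0\|\,\theta(\jed_\cA)=\|a_0\|\,c.$$
Hence $c\ge\|a_0\|^{-1}\jed_\cB$, so $c$ is invertible. Note that $a_0\neq0$ unless $\cB=0$, which is a trivial case.

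\emph{Step 2: reduction to the unital case.} Set $\theta_0(a)=c^{-1/2}\theta(a)c^{-1/2}$. Since conjugation by the invertible element $c^{-1/2}$ is a completely positive bijection with completely positive inverse (conjugation by $c^{1/2}$), $\theta_0$ is a unital linear bijection with $\theta_0$ and $\theta_0^{-1}$ both completely positive. Its inverse $\theta_0^{-1}(b)=\theta^{-1}(c^{1/2}bc^{1/2})$ is also unital, because $\theta_0(\jed)=\jed$.

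\emph{Step 3: a unital complete order isomorphism is a $*$-isomorphism.} Both $\theta_0$ and $\theta_0^{-1}$ are unital completely positive, hence completely contractive. By the Kadison--Schwarz inequality for unital $2$-positive maps,
$$\theta_0(a)^*\theta_0(a)\le\theta_0(a^*a)\quad\text{and}\quad\theta_0^{-1}(b)^*\theta_0^{-1}(b)\le\theta_0^{-1}(b^*b).$$
Put $b=\theta_0(a)$ in the second inequality and apply the positive map $\theta_0$:
$$\theta_0(a^*a)\le\theta_0\bigl(\theta_0^{-1}(\theta_0(a)^*\theta_0(a))\bigr)=\theta_0(a)^*\theta_0(a).$$
Combined with the first inequality this gives $\theta_0(a^*a)=\theta_0(a)^*\theta_0(a)$ for every $a\in\cA$. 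Thus every element of $\cA$ lies in the multiplicative domain of $\theta_0$. By Choi's multiplicative domain theorem, $\theta_0(ab)=\theta_0(a)\theta_0(b)$ for all $a,b\in\cA$. Since $\theta_0$ is positive it is $*$-preserving, and so $\theta_0$ is a bijective $*$-homomorphism, that is, a $*$-isomorphism. Normality follows from Lemma~\ref{lem:tau-auto}'s argument: a $*$-isomorphism of von Neumann algebras is automatically normal. Hence $\cA\cong\cB$.

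\emph{Main obstacle.} Step 1 is the only place where care is needed, because $\theta$ is not assumed unital. The key point is that surjectivity of $\theta$ combined with positivity of $\theta^{-1}$ forces $c$ to dominate a positive multiple of $\jed_\cB$. Once that is in hand, the rest is the standard Choi multiplicative-domain argument.
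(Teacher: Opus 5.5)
Your proposal is correct and follows essentially the same route as the paper: the same bound $\jed\le\|\theta^{-1}(\jed)\|\,c$ for invertibility, the same normalisation by $c^{-1/2}$, and the same two-sided Schwarz-inequality argument forcing $\theta_0(a^*a)=\theta_0(a)^*\theta_0(a)$. The only differences are cosmetic: you apply $\theta_0$ rather than $\theta_0^{-1}$ when combining the two inequalities, and you invoke Choi's multiplicative-domain theorem where the paper polarizes.
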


\begin{proof}
Put $d:=\theta^{-1}(\jed)$. Both $c$ and $d$ are positive, and $d\ne0$ since
$\theta^{-1}$ is injective. From $d\le\|d\|\jed$ and positivity of $\theta$ we
get $\jed=\theta(d)\le\|d\|\,c$, whence $c\ge\|d\|^{-1}\jed$ and $c$ is
invertible.

The map $b\mapsto c^{-1/2}bc^{-1/2}$ is completely positive, so $\theta_0$ is
completely positive, and $\theta_0(\jed)=\jed$. Its inverse is
$\theta_0^{-1}(b)=\theta^{-1}(c^{1/2}bc^{1/2})$, again a composition of
completely positive maps, and $\theta_0^{-1}(\jed)=\theta^{-1}(c)=\jed$. Thus
$\theta_0$ and $\theta_0^{-1}$ are unital completely positive bijections.

By the Schwarz inequality for unital completely positive maps
\cite{ChoiSchwarz,P03},
$$\theta_0(a)^*\theta_0(a)\le\theta_0(a^*a),\qquad
\theta_0^{-1}(b)^*\theta_0^{-1}(b)\le\theta_0^{-1}(b^*b).$$
Applying the positive map $\theta_0^{-1}$ to the first inequality yields
$\theta_0^{-1}\bigl(\theta_0(a)^*\theta_0(a)\bigr)\le a^*a$, while the second
inequality with $b=\theta_0(a)$ yields the reverse. Hence
$\theta_0^{-1}\bigl(\theta_0(a)^*\theta_0(a)\bigr)=a^*a$, i.e.
$$\theta_0(a^*a)=\theta_0(a)^*\theta_0(a)\qquad(a\in\cA).$$
So the multiplicative domain of $\theta_0$ is all of $\cA$, and polarization
shows that $\theta_0$ is a unital $*$-homomorphism; being bijective, it is a
$*$-isomorphism.
\end{proof}

\begin{theorem}
\label{thm:converse}
Let $\cN$ be a von Neumann algebra. The following conditions are equivalent.
\begin{enumerate}
\item[\rm(i)] $\cN$ is $*$-anti-isomorphic to itself, i.e.\ $\cN\cong\cN^\circ$;
\item[\rm(ii)] there exists a natural Choi--Jamio{\l}kowski type isomorphism for
$\cN$ in the sense of Definition~\ref{def:natural-CJ};
\item[\rm(iii)] there exists a family $(\cJ_\cM)_\cM$ of Choi--Jamio{\l}kowski
type isomorphisms which is natural with respect to the morphisms
$\alpha_p:\bC\to M_n(\bC)$, $\alpha_p(\lambda)=\lambda p$, where $n\in\bN$ and
$p\in M_n(\bC)$ is a projection.
\end{enumerate}
\end{theorem}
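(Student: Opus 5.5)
The plan is to prove the cycle (i)$\Rightarrow$(ii)$\Rightarrow$(iii)$\Rightarrow$(i). The first two implications are straightforward; all the work is in (iii)$\Rightarrow$(i).

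\emph{(i)$\Rightarrow$(ii).} Fix a $*$-anti-automorphism $\tau$ and take $\cJ_\cM:=\Lambda_\tau^{-1}$ for every $\cM$. By Corollary~\ref{cor:main} each component is a Choi--Jamio{\l}kowski type isomorphism. On elementary tensors it is given by $\cJ_\cM(\phi)(x\otimes y)=\phi(x)(\tau(y))$. For a normal $*$-homomorphism $\alpha:\cM_1\to\cM_2$ we then get
$$\cJ_{\cM_1}(\phi\circ\alpha)(x\otimes y)=\phi(\alpha(x))(\tau(y))=\bigl(\cJ_{\cM_2}(\phi)\circ(\alpha\otimes\id_\cN)\bigr)(x\otimes y).$$
Both sides are normal functionals, so by $\sigma$-weak density of $\cM_1\algtens\cN$ they coincide, and naturality holds. \emph{(ii)$\Rightarrow$(iii)} is immediate, since (iii) only asks for naturality on a subclass of morphisms.

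\emph{(iii)$\Rightarrow$(i).} Here I would work only with $\cM=M_n(\bC)$ and transport both sides to preduals of matrix algebras.
\begin{itemize}
\item By Proposition~\ref{prop:canonical}, $\Lambda_\kappa$ identifies $F(M_n)=\Nor(M_n,\cN_*)$ with $(M_n\otimes\cN^\circ)_*=M_n(\cN^\circ)_*$, as ordered spaces. The cone of completely positive maps corresponds to the positive cone.
\item The identification $\Lambda_\kappa$ is itself natural under precomposition, by the computation in the previous paragraph with $\tau$ replaced by $\kappa$.
\item Hence $T_n:=\cJ_{M_n}\circ\Lambda_\kappa$ is a linear homeomorphism $M_n(\cN^\circ)_*\to M_n(\cN)_*$ with $T_n$ and $T_n^{-1}$ both positive.
\item $T_n$ commutes with the maps $\omega\mapsto\omega(p\otimes\,\cdot\,)$ for every projection $p\in M_n$.
\end{itemize}
Put $S:=T_1:(\cN^\circ)_*\to\cN_*$. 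Apply naturality with rank-one projections $p_\xi=\ketbra{\xi}{\xi}$, noting that $\omega(p_\xi\otimes y)=\sum_{i,j}\bar\xi_i\xi_j\,\omega(e_{ij}\otimes y)$. Polarization over $\xi\in\bC^n$ then recovers each $\omega(e_{ij}\otimes\,\cdot\,)$. This should force $T_n$ to act entrywise, $T_n([\omega_{ij}])=[S(\omega_{ij})]$ in the matrix coordinates $\omega_{ij}=\omega(e_{ij}\otimes\,\cdot\,)$. Nothing in the ordering is transposed at this stage, because both sides are read off through the same corners.

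Next, dualise. $S$ is a bounded bijection between preduals, so its adjoint $\theta:=S^*:\cN\to\cN^\circ$ is a normal linear bijection. Positivity of $T_n=S^{(n)}$ and of its inverse, for all $n$, means precisely that $\theta^{(n)}$ and $(\theta^{-1})^{(n)}$ map $M_n(\cN)^+$ onto $M_n(\cN^\circ)^+$. Thus $\theta$ and $\theta^{-1}$ are completely positive between the von Neumann algebras $\cN$ and $\cN^\circ$, and Lemma~\ref{lem:cp-bijection} yields $\cN\cong\cN^\circ$, which is (i).

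I expect the main obstacle to be the bookkeeping in the entrywise identification. One must check that the matrix order on $M_n(\cN^\circ)_*$ coming from Proposition~\ref{prop:canonical} is genuinely the amplification of the order on $(\cN^\circ)_*$, and not its transpose. This is exactly the distinction that separates $\cN$ from $\cN^\circ$ (compare Remark~\ref{rem:why-trace-pairing}), and a silent transposition would make the argument vacuous by Remark~\ref{rem:no-naive-converse}. I would verify it directly on elementary tensors $e_{ij}\otimes\bar y$, using the pairing \eqref{eq:trace-pairing} on the $G$ side and $\Lambda_\kappa$ on the $F$ side, before invoking polarization.
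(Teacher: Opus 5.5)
Your proposal is correct and follows essentially the same route as the paper. You compose with $\Lambda_\kappa$, use naturality under the corners $\alpha_p$ to see that the matrix-level map is the amplification of the scalar one, dualise, and apply Lemma~\ref{lem:cp-bijection}. The only difference is cosmetic: the paper dualises first and reads off $T_{M_n(\bC)}(p\otimes y)=p\otimes\theta(y)$ directly, whereas you get the entrywise form on the predual side and dualise only $S$; either way your $T_n^*$ equals $\id_{M_n}\otimes\theta$ exactly, so the transposition issue you flag cannot arise.
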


\begin{proof}
(i)$\Rightarrow$(ii). Let $\tau$ be a $*$-anti-automorphism of $\cN$ and let
$\cJ_\cM:=\Lambda_\tau^{-1}$, so that $\cJ_\cM(\phi)(x\otimes y)=\phi(x)(\tau(y))$
by \eqref{C-J_isom_formula}. Each component is a Choi--Jamio{\l}kowski type
isomorphism by Corollary~\ref{cor:main}. Naturality is immediate: for a normal
$*$-homomorphism $\alpha:\cM_1\to\cM_2$ and $\phi\in\Nor(\cM_2,\cN_*)$,
$$\cJ_{\cM_1}(\phi\circ\alpha)(x\otimes y)=\phi(\alpha(x))(\tau(y))
=\cJ_{\cM_2}(\phi)\bigl(\alpha(x)\otimes y\bigr),$$
and both sides are normal, so the identity persists on $\cM_1\vNtens\cN$.

(ii)$\Rightarrow$(iii) is trivial.

(iii)$\Rightarrow$(i). Composing the components with the canonical
identification of Proposition~\ref{prop:canonical}, we obtain linear
homeomorphisms
$$\Psi_\cM:=\cJ_\cM\circ\Lambda_\kappa\ :\
(\cM\vNtens\cN^\circ)_*\longrightarrow(\cM\vNtens\cN)_*$$
which map the positive cone onto the positive cone in both directions, and
which are natural with respect to the morphisms $\alpha_p$, because
$\Lambda_\kappa$ is natural in $\cM$.

Passing to adjoints, $T_\cM:=\Psi_\cM^*$ is a weak$^*$-continuous linear
bijection
$$T_\cM:\cM\vNtens\cN\longrightarrow\cM\vNtens\cN^\circ$$
such that $T_\cM$ and $T_\cM^{-1}$ are positive, and naturality becomes
\begin{equation}
\label{eq:nat-adjoint}
(\alpha\otimes\id_{\cN^\circ})\circ T_{\cM_1}=T_{\cM_2}\circ(\alpha\otimes\id_\cN).
\end{equation}
Write $\theta:=T_{\bC}:\cN\to\cN^\circ$, using $\bC\vNtens\cN=\cN$.

Fix $n\in\bN$ and a projection $p\in M_n(\bC)$. Applying
\eqref{eq:nat-adjoint} to $\alpha_p$, whose amplification is
$\alpha_p\otimes\id_\cN:y\mapsto p\otimes y$, we obtain
$$T_{M_n(\bC)}(p\otimes y)=p\otimes\theta(y)\qquad(y\in\cN).$$
The projections span $M_n(\bC)$ linearly, so by linearity
$$T_{M_n(\bC)}=\id_{M_n(\bC)}\otimes\,\theta
\quad\text{on}\quad M_n(\bC)\vNtens\cN=M_n(\cN).$$
Since $T_{M_n(\bC)}$ and its inverse are positive, $\id_{M_n}\otimes\,\theta$
and $\id_{M_n}\otimes\,\theta^{-1}$ are positive, that is, $\theta$ and
$\theta^{-1}$ are $n$-positive. As $n$ was arbitrary, both are completely
positive.

By Lemma~\ref{lem:cp-bijection}, $\cN$ is $*$-isomorphic to $\cN^\circ$, i.e.\
$\cN$ is $*$-anti-isomorphic to itself.
\end{proof}

\begin{remark}
Two features of the proof are worth noting. First, involutivity of the
anti-automorphism plays no role anywhere, in accordance with
Remark~\ref{rem:involutive-not-needed}. Second, condition (iii) shows how
little naturality is actually needed: it suffices to require compatibility with
the embeddings of $\bC$ into finite matrix algebras as corners. This is the
categorical counterpart of the elementary fact that complete positivity is
detected by matrix amplifications.
\end{remark}

\subsection{Connes' factors}

Recall that $\cM$ is said to be anti-isomorphic to itself if there is a normal
linear bijection $\alpha:\cM\to\cM$ with
$$\alpha(x^*)=\alpha(x)^*,\qquad\alpha(xy)=\alpha(y)\alpha(x)
\qquad(x,y\in\cM),$$
equivalently, if $\cM$ is $*$-isomorphic to $\cM^\circ$. Connes constructed a
family of von Neumann algebras which are not anti-isomorphic to themselves; all
of them are factors of type III \cite{Connes}.

\begin{corollary}
\label{cor:connes}
Let $\cN$ be a factor of Connes' type, i.e.\ a factor not anti-isomorphic to
itself. Then no natural Choi--Jamio{\l}kowski type isomorphism for $\cN$ exists.
Equivalently, there is no way of assigning to every
$\phi\in\Nor(\cM,\cN_*)$ a normal functional on $\cM\vNtens\cN$, compatibly with
the corners $\alpha_p$ and matching complete positivity with positivity.
\end{corollary}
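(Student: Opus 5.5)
This is the contrapositive of Theorem~\ref{thm:converse}, so the plan is to argue by contradiction. Suppose $\cN$ is a factor not anti-isomorphic to itself, and suppose $\cJ$ is a natural Choi--Jamio{\l}kowski type isomorphism for $\cN$ in the sense of Definition~\ref{def:natural-CJ}. Then condition (ii) of Theorem~\ref{thm:converse} holds. By (ii)$\Rightarrow$(iii)$\Rightarrow$(i) we get $\cN\cong\cN^\circ$, which contradicts the hypothesis. This proves the first assertion.

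For the ``equivalently'' clause, I would make the informal description precise as condition (iii) of Theorem~\ref{thm:converse}. Such an assignment is a family $(\cJ_\cM)_\cM$ with the following properties:
\begin{itemize}
\item each $\cJ_\cM$ maps $\Nor(\cM,\cN_*)$ into $(\cM\vNtens\cN)_*$ and is a linear homeomorphism;
\item $\phi$ is completely positive if and only if $\cJ_\cM(\phi)$ is positive, i.e.\ each $\cJ_\cM$ is a Choi--Jamio{\l}kowski type isomorphism in the sense of Definition~\ref{def:CJ};
\item compatibility with the corners means $\cJ_{\bC}(\phi\circ\alpha_p)=\cJ_{M_n(\bC)}(\phi)\circ(\alpha_p\otimes\id_\cN)$ for every projection $p\in M_n(\bC)$.
\end{itemize}
With this reading, the direction (iii)$\Rightarrow$(i) of the theorem rules the family out directly. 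It also shows that the weaker formulation is equivalent to the stronger one, since (i), (ii) and (iii) are equivalent for any $\cN$ and all three fail here.

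The only genuine input beyond Theorem~\ref{thm:converse} is that such factors exist. That is Connes' theorem \cite{Connes}, which provides type III factors not $*$-isomorphic to their opposite algebras.

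One point needs checking. The hypothesis ``not anti-isomorphic to itself'' as stated in the preceding subsection requires a \emph{normal} anti-isomorphism, while condition (i) of the theorem is phrased as an abstract $*$-isomorphism $\cN\cong\cN^\circ$. These agree because every $*$-isomorphism between von Neumann algebras is automatically normal (Lemma~\ref{lem:tau-auto}). So the two formulations coincide and no gap arises. I expect no real obstacle beyond being explicit about this normality point and about which reading of ``compatibly'' is meant.
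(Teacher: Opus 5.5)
Your proposal is correct and matches the paper, which states the corollary without separate proof as the contrapositive of (ii)$\Rightarrow$(i) and (iii)$\Rightarrow$(i) in Theorem~\ref{thm:converse}; your corner-compatibility formula $\cJ_{\bC}(\phi\circ\alpha_p)=\cJ_{M_n(\bC)}(\phi)\circ(\alpha_p\otimes\id_\cN)$ and your remark on automatic normality are both accurate. One small correction: Connes' existence theorem is not an input to the proof at all. It only guarantees that the hypothesis is not vacuous.
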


Physically, the canonical object produced by Proposition~\ref{prop:canonical}
is a normal state on $\cM\vNtens\cN^\circ$, not on $\cM\vNtens\cN$. For a
bipartite system whose subsystems are described by $\cM$ and $\cN$, this means
that the correspondence between channels and bipartite states, which in the
finite-dimensional theory is a triviality, requires an additional structural
input, and this input is precisely a symmetry of the type of charge conjugation
or time reversal. Since type III factors arise in algebraic quantum field
theory as local algebras of observables, Corollary~\ref{cor:connes} suggests
that correlations in such systems may be structurally different from the
familiar ones.

It remains open whether the failure of the correspondence at the level of the
whole space is visible already at the level of individual preparations. We
formulate this as follows. Let $\cM$ be a von Neumann algebra and let
$E(\cM)=\{a\in\cM:0\le a\le\jed\}$ be its set of effects; a
\emph{preparation} of the bipartite system $(\cM,\cN)$ is a map
$\omega:E(\cM)\times E(\cN)\to[0,1]$ which is separately finitely additive and
satisfies the no-signalling condition, i.e.\ the marginals
$\omega(a,\jed)$ and $\omega(\jed,b)$ do not depend on the choice of the
POVM completing $a$, respectively $b$. A preparation of the system
$(\cM,\cM^\circ)$ is called \emph{symmetric} if $\omega(a,\bar b)=\omega(b,\bar a)$
for all $a,b\in E(\cM)$; note that $E(\cM)$ and $E(\cM^\circ)$ coincide as
sets, so this condition is meaningful.

\begin{conjecture}
\label{non_anti_isom}
Let $\cM$ be a von Neumann algebra and let $\omega$ be a symmetric preparation
of $(\cM,\cM^\circ)$. If there exists a normal state
$\varrho\in(\cM\vNtens\cM^\circ)_*^{+,1}$ with
$\varrho(a\otimes\bar b)=\omega(a,\bar b)$ for all $a,b\in E(\cM)$, then $\cM$
is $*$-anti-isomorphic to itself.
\end{conjecture}

By the generalized Gleason theorem \cite{Hamhalter,BunceWright}, a preparation
extends to a bounded separately normal bilinear form on $\cM\times\cN$, that
is, to an element of the space $\cL(\cM,\cN_*)$ of
Remark~\ref{rem:L-instead-of-CB}. The content of the conjecture is therefore
that symmetry of the preparation should obstruct the passage from $\cL$ to
$\Nor$, which by Remark~\ref{rem:L-instead-of-CB}(b) is exactly the
binormal-versus-normal problem of Effros and Lance \cite{EffrosLance}. We
regard this connection, rather than the conjecture itself, as the natural next
step.

\section{Conclusions}
\label{sec:concl}

In this paper, we explored the generalization of the Choi--Jamio{\l}kowski isomorphism from the finite-dimensional Hilbert space formalism to von Neumann algebras. We showed that anti-isomorphism of one of the algebras with itself is a sufficient condition for C-J-type isomorphisms to exist, in the precise sense of Definition~\ref{def:CJ}; involutivity of the anti-automorphism, assumed in the classical case, turns out not to be needed. Along the way we isolated a feature that has no finite-dimensional counterpart: the domain admits no intrinsic description in terms of properties of individual maps, since neither complete boundedness nor complete positivity cuts the space of normal maps down to $\cM_*\widehat\otimes\cN_*$ (Remarks~\ref{rem:L-instead-of-CB} and~\ref{rem:cp-not-cb}). The role of the anti-automorphism is to interchange complete positivity with complete co-positivity (Remark~\ref{rem:role-of-tau}).

Section~\ref{sec:connes} showed that this condition is also necessary once naturality in the first algebra is imposed, and that some such requirement is unavoidable, since for any algebra anti-isomorphic to itself a Choi--Jamio{\l}kowski type isomorphism exists for trivial reasons. The proof isolates the canonical correspondence, which always identifies $\Nor(\cM,\cN_*)$ with $(\cM\vNtens\cN^\circ)_*$; the role of an anti-automorphism is only to replace $\cN^\circ$ by $\cN$. For the type III factors of Connes no such replacement is possible.

This implies that quantum correlations in extended relativistic systems or quantum statistical mechanics may possess a fundamentally different nature than those in standard finite-dimensional quantum information theory. Future research should aim to develop mathematical tools and generalized measure-theoretic approaches to classify bipartite systems, described by non-anti-isomorphic local algebras.


\section*{Author Contributions}
Conceptualization, M.M. and M.C.; formal analysis, M.M. and M.C.;
investigation, M.M. and M.C.; writing---original draft preparation, M.M. and
M.C.; writing---review and editing, M.M. and M.C. All authors have read and
agreed to the published version of the manuscript.

\section*{Funding}
This research received no external funding.

\section*{Institutional Review Board Statement}
Not applicable.

\section*{Informed Consent Statement}
Not applicable.

\section*{Data Availability Statement}
No new data were created or analysed in this study. Data sharing is not
applicable to this article.

\section*{Conflicts of Interest}
The authors declare no conflicts of interest.

\bibliography{marcin.bib}

\end{document}